\newtheorem{lemma}{Lemma}
\newtheorem{proposition}{Proposition}
\newtheorem{theorem}{Theorem}
\newtheorem{corollary}{Corollary}
\newcommand{\Spec}{\text{Spec  }}
\newcommand{\Spf}{\text{Spf  }}
\theoremstyle{definition}
\newtheorem{remark}{Remark}
\begin{document}

\title{A Monodromy criterion for the good reduction of $K3$ surfaces}
\author{Genaro Hern\'andez Mada}

\maketitle
\begin{abstract}
\noindent
We give a criterion for the good reduction of semistable $K3$ surfaces over $p$-adic fields using purely $p$-adic methods. We use neither $p$-adic Hodge theory nor transcendental methods as in the analogous proofs of criteria for good reduction of curves or $K3$ surfaces. We achieve our goal by realizing the special fiber $X_s$ of a semistable model $X$ of a $K3$ surface over the $p$-adic field $K$, $X_K$ as a special fiber of a log-family in characteristic $p$ and use an arithmetic version of the Clemens-Schimd exact sequence in order to obtain a Kulikov-Persson-Pinkham classification theorem in characteristic $p$.
\end{abstract}

{\bf Keywords:} {$K3$ surfaces, Good Reduction, Monodromy, Clemens-Schmid exact sequence.}

{\bf 2010 Mathematics Subject Classification:} 14F30, 11G25, 14F35.

{\bf Acknowledgement: } This work is a part of the author's PhD thesis. It has been supported by an Erasmus Mundus ALGANT-DOC scholarship and was done at the Universities of Padova, Concordia and Bordeaux. The author has been partially supported by the MIUR-PRIN 2010-11 grant Arithmetic Algebraic Geometry and Number theory. \newline I would especially like to thank my main supervisors Bruno Chiarellotto and Adrian Iovita.

{\bf Comments: } Submitted to Mathematische Zeitschrift.

\section{Introduction}
Let $p>0$ be a prime integer and $K$ a finite extension of $\mathbb Q_p$. Consider a smooth, proper and geometrically irreducible scheme $X_K$ over $\Spec K$. The question of whether $X_K$ has good reduction or not can be answered via $\ell$-adic or $p$-adic criteria in some cases. For example, if $X_K=A_K$ is an abelian variety and $G_K$ the absolute Galois group of $K$ we have that $A_K$ has good reduction if and only if for all $\ell \neq p$ (equivalently, for some $\ell \neq p$), the $\ell$-adic $G_K$-representation $T_\ell (A_K)$ is unramified (see \cite[theorem 1]{[ST68]}). The $p$-adic criterion says that $A_K$ has good reduction if and only if the $p$-adic $G_K$-representation $T_p(A_K)$ is crystalline (see \cite[Theorem II.4.7]{[CI99]} and \cite[Corollaire 1.6]{[Br00]}). Recall that in general for semistable $p$-adic representations this is equivalent to having trivial monodromy operator.\newline \indent
For more general varieties the criteria from the preceding paragraph are not valid, but in some cases, different criteria can be obtained. For example if $X_K$ is a curve with semistable reduction, Oda (in \cite[Section 3]{[Oda95]}) obtained an $\ell$-adic criterion looking at the Galois action on the \'etale fundamental group (via an analogous trascendental result) and Andreatta-Iovita-Kim (\cite[Theorem 1.6]{[AIK13]}) obtained its $p$-adic version studying the monodromy action on the De Rham unipotent fundamental group (via $p$-adic Hodge Theory). This means that it is not enough to look at the first cohomology group with its Galois/monodromy action but one needs to look at the whole (unipotent) fundamental group (i.e., not only its abelianization). \newline \indent
In this article we obtain a $p$-adic criterion for $K3$ surfaces. Namely we suppose that $p>3$ and $X_K$ is a smooth, projective $K3$ surface over $\Spec K$ having a minimal semistable model $X$ over the ring of integers $O_K$ of $K$. We may assume to have combinatorial reduction (see \cite[Proposition 3.5]{[Na00]}). Then since we are dealing with $K3$ surfaces, the first De Rham cohomology group is trivial, as well as the connected De Rham fundamental group. Therefore we look at the monodromy action on the second De Rham cohomology group $H^2_{\textrm{DR}}(X_K)$.  This operator $N$ (the mondromy that is) is described in the frameweork of the theory of log-schemes and log-crystalline cohomology (see \cite[Theorem 5.1]{[HK94]}).  Then our result  is  the following: 

\begin{center}
{\it Under the hypotheses above, the $K3$ surface $X_K$ has good reduction if and only if the monodromy $N$ is zero on $H^2_{\textrm{DR}}(X_K)$.}
\end{center}
 
In fact we obtain more than that. We know that the operator $N$ is always nilpotent ($N^3$ is always trivial). In the case the reduction of $X$ is not good we can refine the theorem just stated: the type of bad reduction is determined by the order of nilpotency of $N$. For the complete result, see theorem \ref{mainthm} .\newline \indent
One can also note that this criterion for good reduction in terms of the monodromy operator on log-crystalline cohomology can also be formulated in  ``\'etale terms''. Namely,  $X_K$ has good reduction if and only if $H_{\textrm{\'et}}^2(X_{\overline{K}}, \mathbb Q_p)$ is a crystalline representation. This is a consequence of our criterion and the comparison theorem \cite[Theorem 0.2]{[Ts99]}.\newline \indent
In the classical situation (over the complex numbers), given a semistable degeneration of $K3$ surfaces the work of Kulikov \cite{[Ku77]}, Persson-Pinkham \cite{[PP81]} and Morrison \cite{[Mo84]} show how the monodromy action on the generic fiber determines the behavior of the special one. To prove this one uses all the information coming from the structure of the family: the weight-monodromy conjecture and the Clemens-Schmid exact sequence. Our proof has been inspired by these methods. \newline \indent
The monodromy on the De Rham cohomology of $X_K$  is given by the monodromy operator on the log-crystalline cohomology of the special fiber $X_s$ (which is a characteristic $p$-scheme) endowed with the induced log-structure (see for example \cite[Theorem 5.1]{[HK94]}). Using Nakkajima's results on deformations of $K3$ surfaces (\cite{[Na00]}) we may construct a log-smooth deformation of our special fiber over the ring of formal power series $k[[t]]$, where $k$ is the residue field of $K$. Then using  Popescu's version of Artin's approximation theorem we get a deformation of $X_s$ over a smooth scheme $Y$ over $k[t]$ (possibly of dimension larger than 1). Finally by taking a well-chosen curve inside $Y$ we are reduced to the case of a family over a smooth curve, so we can use Chiarellotto-Tsuzuki's results for this setting. In particular for such a family we can use the weight-monodromy conjecture and the existence of a Clemens-Schmid type exact sequence. This gives the elements to rephrase Kulikov-Persson-Pinkham's and Morrison's results in characteristic $p$, allowing us to get our main theorem (theorem \ref{mainthm}) which is similar to the one obtained by P\'erez Buend\'ia in \cite{[Pe14]}. \newline \indent
This method of proof is completely different to the one used by Matsumoto in \cite{[Ma14]}, who also obtains results similar to ours, but for a different case, allowing algebraic spaces as models and always working with the generic fiber.\newline \indent
One might have hoped to use our methods to study the case of semistable Enriques surfaces. More precisely, we work with a semistable model, under some hypothesis about the canonical bundle over the DVR. This is compatible with \cite{[LM14]}, where again they study only the generic fiber. Then, we can follow our techniques along the lines we used for $K3$ surfaces since again by Nakkajima's work \cite{[Na00]} we have a classification of the possible special fibers (see the beginning of our section \ref{AppK3} to see the complete classification). But when we apply Morrison's methods we find that the monodromy is zero on the second cohomology group without any connection with the fact that the special fiber is smooth or not. We plan to investigate this problem in another article (see also the parallel work \cite{[LM14]}). \newline \indent

Let us give an outline of this article. In sections \ref{Notation}-\ref{Reduction} we get a generalization of Clemens-Schmid exact sequence in characteristic $p$ and in the last two sections we use it to prove our main result. More precisely, in section \ref{Notation} we establish our notation and setting.

In section \ref{Popescu}, we use N\'eron-Popescu desingularization (see \cite[Theorem 1.1]{[Sw95]}) to write the ring of formal power series $k[[t]]$ as a limit of smooth $k[t]$-algebras:
\[
k[[t]] = \lim_{\stackrel{\longrightarrow}{\alpha}} A_\alpha,
\]
and this allows, in a similar way to what is done in \cite[Section 4]{[It05]}, to see our situation as a fiber inside a larger family of varieties $f:X_A\rightarrow Y=\Spec A$, where $A=A_\alpha$ for some $\alpha$. Then we can use the relative cohomology theories defined and studied by Shiho \cite{[Sh08]}, which give relative cohomology sheaves on a formal scheme $\mathcal Y$, that is a smooth lifting of $Y$. This will be useful once we have a deformation, which is done in section \ref{AppK3}. \newline \indent
In section \ref{Cohomology} we state the results on relative cohomology that are useful for our purposes. In particular we need the base-change theorem and the comparison isomorphisms between the different cohomology theories (log-crystalline, log-convergent and log-analytic), since these are needed to use the results in \cite{[CT12]}. This means that the relative cohomology sheaves, defined on the large family, satisfy the desired properties. \newline \indent
Then, in section \ref{Reduction} we construct a smooth curve $C$ inside $Y$ in such a way that we can restrict the family $X_A \rightarrow Y$, as well as the cohomology sheaves, to a smaller family $X_C$ over this curve. In particular this allows us to use the main result in \cite{[CT12]} and get the version of the Clemens-Schmid exact sequence for this setting:

\[
\cdots \rightarrow H_{rig}^m(X_s) \rightarrow H_{log-crys}^m((X_s,M_s)/W^\times)\otimes K_0 \stackrel{N}{\rightarrow}  
\]
\[
H_{log-crys}^m((X_s,M_s)/W^\times)\otimes K_0(-1)\rightarrow H_{X_s,rig}^{m+2}(X_C) \rightarrow H_{rig}^{m+2}(X_s)\rightarrow \cdots
\]

In section \ref{MonCrit} we use the Clemens-Schmid exact sequence in characteristic $p$ to get criteria for $N$ to be the zero map on $H_{log-crys}^1$ or $H_{log-crys}^2$, assuming that we are dealing with a semistable family of varieties over a smooth curve over a finite field. For this we use the fact that the monodromy and weight filtrations on the special fiber coincide (as proved in \cite{[CT12]}, using the fact that we deal with a family of varieties). For a more general situation, i.e., if we do not assume that the special fiber is inside a semistable family of varieties, this is known only for the case of curves and surfaces (see \cite[Sections 5 and 6]{[Mk93]}). The criteria that we get in this section are in terms of the Betti numbers of the dual graph of the special fiber which can be easily described in the case of combinatorial reduction. As we mentioned before we can always restrict ourselves to this case after a finite base extension. \newline \indent
Finally in section \ref{AppK3}, after introducing the deformation theory for $K3$ surfaces along the lines of Nakkajima \cite{[Na00]}, we apply the criteria from section \ref{MonCrit} to the case of $K3$ surfaces, assuming that the special fiber is combinatorial, i.e., it is one of three possible types. We obtain that the degree of nilpotency determines the type of degeneracy of the special fiber. Our main result will be stated in theorem \ref{mainthm}: the trivial monodromy action on the second de Rham cohomological group is equivalent to good reduction.

\section{Notation and setting}\label{Notation}
In this article we fix a prime integer $p>0$ and $K$ a finite extension of $\mathbb Q_p$ with ring of integers $O_K$. We denote by $X_K$ a smooth, projective $K3$ surface over $K$. For a finite field $k$, we denote by $W=W(k)$ its ring of Witt vectors and $K_0$ the fraction field of $W$. We shall denote by the same letter $W$ the formal scheme Spf $W$ with the trivial log structure, and we denote by $W^\times$ the same formal scheme with the log structure given by $1\mapsto 0$.\newline \indent
Recall that a divisor $Z\subset Y$ of a noetherian scheme is said to be a strict normal crossing divisor (SNCD) if $Z$ is a reduced scheme and, if $Z_i$, $i\in J$ are the irreducible components of $Z$, then, for any $I\subset J$, the intersection $Z_I=\cap_{i\in I}Z_i$ is a regular scheme of codimension equal to the number of elements of $I$. We shall say that $Y$ is a normal crossing divisor (NCD) if, \'etale locally on $Y$, it is a SNCD.\newline \indent
The first five sections are dedicated to obtaining a Clemens-Schmid type exact sequence. For this, we consider a proper and flat morphism $F:X\rightarrow \Spec k[[t]]$ over $k$, where $X$ is a smooth scheme such that \'etale locally it is \'etale over $$\textrm{Spec}\left( k[[t]][x_1,...,x_n]/(x_1\cdots x_r -t)\right).$$
We denote by $s$ the closed point of $\Spec k[[t]]$ and $X_0$ its fiber, which is a NCD inside $X$. We denote by $(X,M)$ the scheme $X$ endowed with the log structure defined by $X_0$, $\Spec k[[t]]^\times$ the scheme $\Spec k[[t]]$ endowed with the log structure defined by the point $s$ (i.e., by the NCD given by the ideal generated by $t$), and $s^\times$ the log point given by the point $s$ and the log structure induced from $\Spec k[[t]]^\times$. 
Then, we have the following cartesian diagram of log schemes
\begin{equation}\label{initial}
\begindc{0}[700]
\obj(0,1)[X0]{$(X_0,M_0)$}
\obj(0,0)[kk]{$s^\times$}
\obj(1,1)[XX]{$(X,M)$}
\obj(1,0)[kt]{$\text{Spec } k[[t]]^\times$}
\mor{XX}{kt}{$F$}[\atleft,\solidarrow]
\mor{X0}{XX}{}[\atright,\solidarrow]
\mor{X0}{kk}{}[\atright,\solidarrow]
\mor{kk}{kt}{}[\atleft,\solidarrow]
\enddc
\end{equation}
where $(X_0,M_0)$ is obtained by taking the fiber product in the category of log schemes.

\section{A construction using N\'eron-Popescu desingularization}\label{Popescu}
In order to get the desired result, we need to study the cohomology of the special fiber $X_0$ of $X$ over $k[[t]]$ and for this we first use the following theorem of Popescu (see  \cite[Theorem 1.1]{[Sw95]}):

\begin{theorem}
Let $f:R\rightarrow \Lambda$ be a morphism of rings. Then, $f$ is geometrically regular if and only if $\Lambda$ is a filtered colimit of smooth $R$-algebras.
\end{theorem}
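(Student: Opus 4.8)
The plan is to recognise this as the N\'eron--Popescu desingularization theorem and to prove the two implications separately; the first is soft, and the second --- desingularization proper --- is where the work lies.

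For ``$\Lambda$ a filtered colimit of smooth $R$-algebras $\Rightarrow f$ geometrically regular'', write $\Lambda=\varinjlim_\alpha A_\alpha$. Flatness passes to filtered colimits, so $\Lambda$ is $R$-flat; and for each prime $\mathfrak p\subset R$ and finite extension $k'$ of $\kappa(\mathfrak p)$ the fibre $k'\otimes_R\Lambda=\varinjlim_\alpha(k'\otimes_R A_\alpha)$ is a filtered colimit of rings smooth, hence regular, over the field $k'$. That such a colimit is again regular --- so that $f$ is geometrically regular --- is a routine verification, the non-Noetherian case requiring only the standard fact that regularity is detected on finitely presented data; this disposes of the first implication.

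For the converse one reduces, by a standard limit argument, to the case $R$ Noetherian, and then to the statement: \emph{every finitely presented $R$-algebra $B$ equipped with an $R$-algebra map $\varphi\colon B\to\Lambda$ admits a factorization $B\to C\to\Lambda$ of $\varphi$ through a smooth $R$-algebra $C$.} Granting this, $\Lambda$ is the filtered colimit of the resulting (filtered) system of such $C$'s, which is the theorem. To produce $C$ one attaches to a presentation $B=R[x_1,\dots,x_n]/(f_1,\dots,f_m)$ its smoothing (Jacobian) ideal $H_{B/R}\subseteq B$: it cuts out the locus where $\Spec B\to\Spec R$ fails to be smooth and is arranged so that $H_{B/R}B=B$ forces $B$ to be smooth over $R$. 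One then enlarges $B$ step by step inside an ambient factorization, at each step increasing the ideal of $\Lambda$ generated by the image of $H_{B/R}$, until after finitely many steps that image is all of $\Lambda$ and the current $B$ may be replaced by a smooth algebra. The engine of a single step is N\'eron's smoothing lemma, in the sharp form of Elkik, Popescu and Spivakovsky: given a nonzerodivisor $\pi$ in the image of $H_{B/R}$ with $\Lambda/\pi\Lambda$ still suitably regular, one writes down explicitly a ``desingularized'' $R$-algebra $B'$ --- obtained by adjoining new variables and dividing suitable combinations of the $f_i$ by $\pi$ --- together with a map $B'\to\Lambda$ factoring $\varphi$ through $B\to B'$ and with strictly larger smoothing ideal modulo $\pi$. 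The outer induction organising the steps runs on an invariant measuring the failure of smoothness (roughly the dimension of the non-smooth locus of an approximating fibre), and it goes through precisely because the geometric regularity of the fibres of $f$ furnishes, after an innocuous faithfully flat base change, the nonzerodivisors $\pi$ with the required regularity of $\Lambda/\pi\Lambda$.

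The step I expect to be the genuine obstacle --- and where I would follow the literature rather than improvise --- is the mixed-characteristic case with imperfect residue fields. Over a field, or when $R$ is unramified, the $\pi$'s and the auxiliary ``$p$-basis'' data feeding N\'eron's lemma are produced by elementary manipulations; but in the ramified imperfect case one must manufacture, inside a merely geometrically regular $R$-algebra, the analogue of a lifted $p$-basis, and here Cohen's structure theorem for complete regular local rings and its refinements become unavoidable. This is the technical heart of Popescu's original proof (and of the reworkings by Swan, Andr\'e and Spivakovsky), and I would reproduce that analysis in full rather than look for a shortcut.
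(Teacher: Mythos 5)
You should first be aware that the paper does not prove this statement at all: it is N\'eron--Popescu desingularization, quoted verbatim as \cite[Theorem 1.1]{[Sw95]} and used as a black box (the only thing the paper actually proves in that section is that $k[t]\rightarrow k[[t]]$ is geometrically regular, so that the theorem applies). So there is no internal argument to compare yours against, and citing the result, as the paper does, would have been a legitimate ``proof'' here. Taken on its own terms, your outline is the standard Popescu--Swan architecture and is accurate as a roadmap: the soft direction by passage to the limit, and the hard direction reduced to factoring every finitely presented $B\rightarrow\Lambda$ through a smooth $R$-algebra, run by induction on the smoothing (Jacobian) ideal $H_{B/R}$ via N\'eron's lemma, with the genuine difficulty in mixed characteristic with imperfect residue fields. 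But as submitted it is a program rather than a proof: the entire mathematical content of the hard direction --- the single N\'eron-type smoothing step, the invariant driving the induction, and the lifted-$p$-basis/Cohen-structure analysis you yourself flag --- is deferred to the literature, i.e.\ you end up exactly where the paper starts, with a citation.

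One caution on the direction you call routine: a filtered colimit of regular rings need not be regular (the perfection of $\mathbb F_p[t]$ is a colimit of smooth $\mathbb F_p$-algebras and is not even Noetherian), so ``regularity is detected on finitely presented data'' is not by itself enough. The verification uses the Noetherian hypothesis on $\Lambda$ that is implicit in the statement (geometric regularity is a notion for Noetherian fibres), and one argues either as Swan does directly on the Noetherian local rings of the fibres, or via the fact that Andr\'e--Quillen homology commutes with filtered colimits together with the flatness of $\Lambda$ over $R$. With that hypothesis made explicit the easy implication is indeed standard.
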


It can be checked that the natural morphism $k[t] \rightarrow k[[t]]$ is geometrically regular according to the definition in \cite{[Sw95]}:
\begin{theorem}
The natural morphism $k[t]\rightarrow k[[t]]$ is geometrically regular.
\end{theorem}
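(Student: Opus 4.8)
The plan is to verify directly the definition of geometric regularity used in \cite{[Sw95]}: one must show that the structure map $\varphi\colon k[t]\to k[[t]]$ is flat and that for every prime $\mathfrak p\subset k[t]$ the fibre ring $k[[t]]\otimes_{k[t]}\kappa(\mathfrak p)$ is a geometrically regular $\kappa(\mathfrak p)$-algebra. Flatness is immediate: $k[[t]]$ is the $(t)$-adic completion of the noetherian ring $k[t]$, and the $I$-adic completion of a noetherian ring is flat.

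For the fibres I would run through the (few) primes of $k[t]$. A maximal ideal $(f)$ with $f\neq t$ satisfies $f(0)\neq 0$, so $f$ is a unit in $k[[t]]$ and the fibre $k[[t]]/(f)$ is zero, hence trivially geometrically regular; the maximal ideal $(t)$ gives the fibre $k[[t]]/t\,k[[t]]=k=\kappa((t))$, which is geometrically regular over itself. The only fibre with genuine content is the one over the generic point $(0)$: writing every nonzero $g\in k[t]$ as $t^{n}u$ with $u(0)\neq 0$ (so $u\in k[[t]]^{\times}$), localising $k[[t]]$ at $k[t]\setminus\{0\}$ amounts to inverting $t$, whence $k[[t]]\otimes_{k[t]}k(t)\cong k[[t]][t^{-1}]=k((t))$. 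It therefore remains to show that $k((t))$ is a geometrically regular $k(t)$-algebra, equivalently --- since a field is geometrically regular over a subfield precisely when the extension is separable --- that $k((t))/k(t)$ is separable.

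In characteristic $0$ there is nothing to prove. In characteristic $p>0$ the point is that $k$, being finite, is perfect, so $k(t)^{1/p}=k(t^{1/p})=k(t)[u]/(u^{p}-t)$; by the criterion that a characteristic-$p$ extension $L/K$ is separable as soon as $L\otimes_{K}K^{1/p}$ is reduced, it suffices to check that $k((t))\otimes_{k(t)}k(t)^{1/p}\cong k((t))[u]/(u^{p}-t)$ is reduced. But $t$ is not a $p$-th power in $k((t))$ --- compare $t$-adic valuations, using $(\sum a_{i}t^{i})^{p}=\sum a_{i}^{p}t^{ip}$ --- so $u^{p}-t$ is irreducible over $k((t))$ and the tensor product is in fact a field. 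This completes the verification. (Alternatively, and more slickly: $k[t]$ is excellent, being of finite type over a field, so the completion $k[t]_{(t)}\to k[[t]]$ is a regular morphism; composing it with the localisation $k[t]\to k[t]_{(t)}$, which is regular, gives the statement.) The only real obstacle here is the generic fibre, i.e. the separability of $k((t))/k(t)$; it is exactly there that perfectness of the finite field $k$ enters, and the statement would fail over an imperfect ground field.
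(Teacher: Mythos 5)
Your proposal is correct and follows essentially the same route as the paper: flatness from completion, reduction to the two relevant fibres, and verification of geometric regularity of the generic fibre $k((t))$ over $k(t)$ by base-changing along the purely inseparable extension $k(t^{1/p})$ (using that the finite field $k$ is perfect) and recognizing the result as the field $k((t^{1/p}))$. The parenthetical alternative via excellence of $k[t]$ is a nice extra, but the main argument coincides with the paper's.
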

\begin{proof}
It is clearly flat since it is a completion. Now there are only two prime ideals of $k[[t]]$. Namely, $0$ and $(t)$, and their respective counterpart in $k[t]$ are the only couples to consider in the definition.\newline \indent
Case 1 (the ideals generated by $t$): in this case, we need to check that $k\otimes_{k[t]}k[[t]]_{(t)}\cong k$ is geometrically regular over $k$, which is trivial.\newline \indent
Case 2 (the ideals 0): in this case, we need to check that $k(t) \otimes_{k[t]} k((t))$ is geometrically regular over $k(t)$. Take a finite extension $k'$ of $k(t)$ such that $(k')^p \subset k(t)$. Note that this is necessarily $k(t^{1/p})$. Indeed, it is a finite extension of degree $p$ (hence it does not have any subextension) and $(k')^p=k(t)$. Then, we only need to check that $k(t^{1/p}) \otimes _{k(t)}(k(t)\otimes_{k[t]} k((t)))$ is a regular local ring, but $$k(t^{1/p}) \otimes _{k(t)}(k(t)\otimes_{k[t]} k((t))) \cong k((t^{1/p})),$$
which is clearly regular.
\end{proof}

In particular,
\begin{equation}\label{ArtinApp}
k[[t]]=\lim_{\stackrel{\longrightarrow}{\alpha}} A_\alpha,
\end{equation}
where the $A_\alpha$'s are smooth $k[t]$-algebras. One can say even more:

\begin{proposition}\label{Iovitaremark}
Let $\overline k$ be an algebraic closure of $k$ and $A$ a smooth $\overline k[t]$-algebra. Then, there exists a finite extension $k'$ of $k$ and a smooth $k'[t]$-algebra $A'$ such that $A' \otimes_{k'} \overline k \cong A$. 
\end{proposition}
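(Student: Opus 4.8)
The plan is to prove this by a routine spreading-out argument. The key facts are that $\overline{k}$ is the filtered union of its finite subextensions over $k$, that $\overline{k}[t]$ is Noetherian (so that ``smooth'' over it means, in particular, finitely presented), and that smoothness of morphisms is local on the base for the fpqc topology.

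First I would descend the algebra itself. Since $\overline{k}[t]$ is Noetherian, the smooth $\overline{k}[t]$-algebra $A$ is finitely presented, so I can write $A \cong \overline{k}[t][x_1,\dots,x_n]/(g_1,\dots,g_m)$. Only finitely many elements of $\overline{k}$ occur among the coefficients of the polynomials $g_1,\dots,g_m$; let $k'$ be the finite subextension of $\overline{k}/k$ they generate. Then $g_1,\dots,g_m \in k'[t][x_1,\dots,x_n]$, and I would set $A' := k'[t][x_1,\dots,x_n]/(g_1,\dots,g_m)$. Since $\overline{k}[t] = k'[t]\otimes_{k'}\overline{k}$ and extension of scalars commutes with forming the quotient by an explicit finite set of generators, there is a canonical isomorphism $A'\otimes_{k'}\overline{k}\cong A$. (Equivalently, one may invoke the general descent of finitely presented algebras through a filtered colimit of rings, e.g. EGA IV$_3$, 8.8.2.)

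Next I would observe that $A'$ is automatically smooth over $k'[t]$, with no further enlargement of $k'$ needed. Indeed, $A'$ is finitely presented over $k'[t]$; the ring map $k'[t]\to\overline{k}[t]=k'[t]\otimes_{k'}\overline{k}$ is faithfully flat, so the induced morphism $\Spec\overline{k}[t]\to\Spec k'[t]$, being affine hence quasi-compact, is an fpqc cover; and $A'\otimes_{k'[t]}\overline{k}[t]\cong A$ is smooth over $\overline{k}[t]$ by hypothesis. Since smoothness of morphisms is local on the base for the fpqc topology (descent of smoothness along faithfully flat quasi-compact morphisms; see e.g. EGA IV$_4$), it follows that $A'$ is smooth over $k'[t]$. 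Combined with the isomorphism of the previous step, this is exactly the assertion of the proposition.

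I do not expect any serious obstacle: the statement is essentially a packaging of standard finiteness and descent facts. The only points that deserve a moment's attention are the two hypotheses one must check in order for the descent to apply, namely that $A$ is finitely presented over $\overline{k}[t]$ (guaranteed by Noetherianity, and also what makes smoothness a descendable property) and that $\Spec\overline{k}[t]\to\Spec k'[t]$ is a faithfully flat quasi-compact morphism.
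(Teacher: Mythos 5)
Your proposal is correct and follows essentially the same route as the paper: descend a finite presentation of $A$ to a finite subextension $k'$ containing the finitely many coefficients, set $A'=k'[t][x_1,\dots,x_n]/(g_1,\dots,g_m)$, and deduce smoothness of $A'$ over $k'[t]$ by faithfully flat descent. Your appeal to fpqc descent of smoothness is exactly the paper's citation of EGA IV, Corollaire 17.7.3(ii), so the two arguments coincide.
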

\begin{proof}
Let us take a presentation of $A$ of the type $\overline k[t][x_1,...,x_n]/(f_1,...,f_c).$ Since $f_1,...,f_c$ are a finite number of polynomials, one needs a finite number of elements of $\overline k$ to define them in the variables $t,x_1,...,x_n$. Let $k'$ be a finite extension of $k$ containing all of those coefficients and define $A' := k'[t][x_1,...,x_n]/(f_1,...,f_c)$. We only need to assure that $A'$ is smooth over $k'[t]$. This is a direct consequence of corollary 17.7.3, part ii), in EGAIV.
\end{proof}

Since $X$ is proper over $k[[t]]$, by (\ref{ArtinApp}) there exist a smooth $k[t]$-algebra, a scheme $X_A$, proper over $\text{Spec } A$, Zariski locally \'etale over $$\Spec A[x_1,...,x_n]/(x_1\cdots x_r-t), $$and such that the following diagram is cartesian:
\begin{equation}\label{ArtinAppr}
\begindc{0}[700]
\obj(0,1)[XX]{$X$}
\obj(0,0)[VV]{$\text{Spec } k[[t]]$}
\obj(1,1)[XA]{$X_A$}
\obj(1,0)[YY]{$\text{Spec } A$}
\mor{XX}{XA}{$u$}[\atleft,\solidarrow]
\mor{XX}{VV}{$F$}[\atright,\solidarrow]
\mor{VV}{YY}{$v$}[\atright,\solidarrow]
\mor{XA}{YY}{$f$}[\atleft,\solidarrow]
\enddc
\end{equation}
Note that the composition $v\circ F$ is flat, hence $f$ is flat on an open of $X_A$ containing the image of $X$ under $u$. Thus, we may assume that $f:X_A \rightarrow \text{Spec } A$ is flat.\newline \indent
Since the divisor of $Y=\text{Spec} A$, defined by $Y_0=(t=0)$ is a NCD, and the fiber product $X_{A,t=0}=Y_0\times_{Y} X_A$ is a NCD divisor in $X_A$, then we can naturally define fine log structures $M_A$ and $N$ on $X_A$ and $Y$, respectively. Then, $f:(X_A,M_A)\rightarrow (Y,N)$ is a morphism of log schemes. Moreover, we have the following:

\begin{lemma}
The morphism $f:(X_A,M_A)\rightarrow (Y,N)$ is log-smooth.
\end{lemma}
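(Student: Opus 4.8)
The plan is to verify log-smoothness using the standard criterion of Kato: it suffices to produce, étale locally on $X_A$, a chart for $f$ of the form $(P \to Q)$ with $P \to \mathcal{O}_Y$, $Q \to \mathcal{O}_{X_A}$ charts for $N$, $M_A$ respectively, such that the kernel and the torsion part of the cokernel of $P^{\mathrm{gp}} \to Q^{\mathrm{gp}}$ are finite groups of order invertible on $X_A$, and such that the induced morphism $X_A \to Y \times_{\mathrm{Spec}\,\mathbb{Z}[P]} \mathrm{Spec}\,\mathbb{Z}[Q]$ is étale. First I would reduce to the local model: by hypothesis $X_A$ is Zariski (hence étale) locally étale over $Z := \mathrm{Spec}\,A[x_1,\dots,x_n]/(x_1\cdots x_r - t)$, and since étale morphisms are log-smooth for the pulled-back log structures and log-smoothness is étale-local on the source, it is enough to treat the morphism $Z \to Y = \mathrm{Spec}\,A$ with the log structures associated to the divisors $(t=0)$ downstairs and $(x_1\cdots x_r = 0)$ upstairs.

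For this local model I would take on $Y$ the chart $\mathbb{N} \to A$, $1 \mapsto t$ (defining $N$), and on $Z$ the chart $\mathbb{N}^r \to \mathcal{O}_Z$, $e_i \mapsto x_i$ (defining $M_A$, since the divisor $\bigcup_i (x_i = 0)$ is the preimage of $(t=0)$ and $t = x_1\cdots x_r$). The monoid homomorphism $P = \mathbb{N} \to \mathbb{N}^r = Q$ is the diagonal $1 \mapsto (1,\dots,1)$; on groups it is $\mathbb{Z} \to \mathbb{Z}^r$, $1 \mapsto (1,\dots,1)$, which is injective with cokernel $\mathbb{Z}^{r-1}$, in particular torsion-free, so the group-theoretic hypothesis of Kato's criterion is satisfied with no primes to invert. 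It remains to check that the natural map $Z \to Y \times_{\mathrm{Spec}\,\mathbb{Z}[\mathbb{N}]} \mathrm{Spec}\,\mathbb{Z}[\mathbb{N}^r]$ is étale; but $Y \times_{\mathbb{Z}[\mathbb{N}]} \mathbb{Z}[\mathbb{N}^r] = \mathrm{Spec}\left(A \otimes_{\mathbb{Z}[u]} \mathbb{Z}[y_1,\dots,y_r]\right)$ where $u \mapsto t$ and $u \mapsto y_1\cdots y_r$, i.e. $\mathrm{Spec}\,A[y_1,\dots,y_r]/(y_1\cdots y_r - t)$, and $Z = \mathrm{Spec}\,A[x_1,\dots,x_n]/(x_1\cdots x_r - t)$ is obtained from it by adjoining the free variables $x_{r+1},\dots,x_n$, hence is even smooth (and the total space morphism can then be cut down or one simply notes affine space is smooth). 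Combining the étale map $X_A \to Z$ (locally) with this, and using that log-smoothness is stable under composition and étale base change, gives the claim.

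I would then assemble these local verifications: log-smoothness is a local property on $X_A$ for the étale topology, the charts above exist étale-locally, and Kato's criterion holds in each chart, so $f:(X_A, M_A) \to (Y,N)$ is log-smooth. The main obstacle here is purely bookkeeping rather than conceptual: one must be careful that the log structure $M_A$ really is the one associated to the monoid $\mathbb{N}^r$ under the étale-local identification — i.e. that pulling back the divisor $(t=0)$ along $X_A \to \mathrm{Spec}\,A$ and along the étale map $X_A \to Z$ yields compatibly the divisor $\bigcup (x_i = 0)$ with its reduced structure — and that the chart monoid $\mathbb{N}^r$ (not a quotient or a larger monoid) genuinely represents it; once this identification is pinned down, the numerical conditions in Kato's criterion are immediate because the relevant group map $\mathbb{Z} \to \mathbb{Z}^r$ is split injective. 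I would also remark that this is exactly the standard computation showing that a semistable (normal crossing) degeneration is log-smooth over the standard log point/disc, so one may alternatively just cite that.
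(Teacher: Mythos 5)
Your proposal is correct and follows essentially the same route as the paper: Kato's chart criterion with the diagonal $\mathbb N\to\mathbb N^r$, the identification $Y\times_{\Spec \mathbb Z[\mathbb N]}\Spec \mathbb Z[\mathbb N^r]\cong \Spec\bigl(A[u_1,\dots,u_r]/(u_1\cdots u_r-t)\bigr)$, and smoothness of the local model over it by adjoining the free variables $x_{r+1},\dots,x_n$, then composing with the \'etale map from $X_A$. The only cosmetic difference is that you first state the criterion in its ``\'etale'' form while the map you (and the paper) actually produce is smooth of relative dimension $n-r$; as your parenthetical indicates, the smooth-plus-strict variant of the criterion is what is really used, exactly as in the paper's proof.
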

\begin{proof}
We use \cite[Theorem 3.5]{[Ka89]}. First note that $f$ has (\'etale locally on $X_A$) a chart $(P_{X_A}\rightarrow M_A, Q_Y\rightarrow N, Q\rightarrow P)$ given by $Q=\mathbb N$, $P=\mathbb N^r$, and the diagonal map $Q\rightarrow P$.\newline \indent
We can easily see also that the kernel and the torsion part of the cokernel of $Q^{gp} \rightarrow P^{gp}$ (which is just the diagonal map $\mathbb Z \rightarrow \mathbb Z^r$) are both trivial.\newline \indent
It remains to prove that the induced morphism $X_A \rightarrow Y \times_{\Spec \mathbb Z[Q]} \Spec \mathbb Z[P]$ is smooth. Recall that $X_A$ is locally \'etale over $$V=\Spec (A[x_1,...,x_n]/(x_1 \cdots x_r-t)),$$ and note that
\[
\Spec A \times_{\Spec \mathbb Z[Q]} \Spec \mathbb Z[P] \cong \Spec A \times_{\Spec \mathbb Z[u]} \Spec \mathbb Z[u_1,...,u_r]
\]
\[
\cong \Spec(A[u_1,...,u_r]/(u_1\cdots u_r -t)) =:W.
\]
The last isomorphism can be verified by checking directly that the ring $$A[u_1,...,u_r]/(u_1\cdots u_r -t)$$ satisfies the universal property of the tensor product $A\otimes_{\mathbb Z[u]} \mathbb Z[u_1,...,u_r]$.\newline \indent
Now note that there are natural closed immersions $j_V:V \hookrightarrow \mathbb A_A^n$, and $j_W:W \hookrightarrow \mathbb A_A^r$. Moreover, the following diagram is cartesian:

\[\begindc{0}[700]
\obj(0,1)[VV]{$V$}
\obj(0,0)[WW]{$W$}
\obj(1,1)[An]{$\mathbb A_A^n$}
\obj(1,0)[Ar]{$\mathbb A_A^r$}
\mor{VV}{An}{$j_V$}[\atleft,\solidarrow]
\mor{VV}{WW}{$h$}[\atright,\solidarrow]
\mor{WW}{Ar}{$j_W$}[\atright,\solidarrow]
\mor{An}{Ar}{$p$}[\atleft,\solidarrow]
\enddc\]
where $h$ is defined by sending each $u_i$ to $x_i$ for $i=1,...,r$, and $p$ is the natural projection from the first $r$ components. Since $p$ is smooth, we get that $h$ is smooth. Since $X_A\rightarrow W$ is the composition of an \'etale and a smooth morphism, we conclude that it is smooth (in the classical sense).
\end{proof}

Then we have the following diagram of log schemes:

\[\begindc{0}[700]
\obj(0,1)[SF]{$(X_0,M_0)$}
\obj(0,0)[Pt]{$s^\times$}
\obj(1,1)[XX]{$(X,M)$}
\obj(1,0)[DV]{$\Spec k[[t]]^\times$}
\obj(2,1)[XA]{$(X_A,M_A)$}
\obj(2,0)[YY]{$(Y,N)$}
\mor{SF}{Pt}{$f_s$}[\atright,\solidarrow]
\mor{XA}{YY}{$f$}[\atleft,\solidarrow]
\mor{Pt}{DV}{}[\atright,\solidarrow]
\mor{SF}{XX}{}[\atright,\solidarrow]
\mor{XX}{XA}{}[\atright,\solidarrow]
\mor{DV}{YY}{}[\atright,\solidarrow]
\mor{XX}{DV}{$F$}[\atright,\solidarrow]
\enddc\]
In particular, note that $s$ is a closed point inside $Y$, hence $(X_0,M_0)$ is a fiber of the log smooth family $(X_A,M_A)\rightarrow (Y,N)$. This means that we can study the cohomology of $X_0$ using relative cohomology sheaves for this family. These are studied in the next section.

\section{Relative Cohomology}\label{Cohomology}

By \cite[Theorem 7, Secion 4]{[El73]}, there exists a $W[t]$-algebra $A_0$ which is smooth over $W$ and such that $A_0/pA_0=A$.  Let $\hat A$ be the $p$-adic completion of $A_0$, and $\mathcal Y=\Spf \hat A$. We can define a log structure $\mathcal N$ on $\mathcal Y$ by $1\mapsto t$, and then we have the following diagram:
\begin{equation}\label{setting}
\begindc{0}[700]
\obj(0,1)[SF]{$(X_0,M_0)$}
\obj(0,0)[Pt]{$s^\times$}
\obj(1,1)[XA]{$(X_A,M_A)$}
\obj(1,0)[YY]{$(Y,N)$}
\obj(2,0)[FY]{$(\mathcal Y, \mathcal N)$}
\mor{SF}{XA}{}[\atleft,\solidarrow]
\mor{SF}{Pt}{$f_s$}[\atright,\solidarrow]
\mor{Pt}{YY}{}[\atright,\solidarrow]
\mor{XA}{YY}{$f$}[\atleft,\solidarrow]
\mor{YY}{FY}{}[\atleft,\solidarrow]
\enddc
\end{equation}
where the lower row consists of two exact closed immersions. Now we are in the situation studied in \cite{[Sh08]} and we can use all the results there. We shall state the results on relative log crystalline, log convergent and log analytic cohomology that are useful to apply the main result in \cite{[CT12]}.

\subsection{Relative Log Crystalline Cohomology}
In the situation of diagram (\ref{setting}), Shiho defined in \cite{[Sh08]}, for any sheaf $\mathcal F$ on the log crystalline site $(X/\mathcal Y)_{crys}^{log}$ the sheaves of relative log crystalline cohomology of $(X_A,M_A)/(\mathcal Y,\mathcal N)$ with coefficient $\mathcal F$, denoted by $R^m f_{X_A/\mathcal Y,crys*}\mathcal F$, and for an isocrystal $\mathcal E=\mathbb Q\otimes \mathcal F$, denoted by $R^m f_{X_A/\mathcal Y,crys*}\mathcal E$. Here we will work only with the trivial log isocrystal $\mathcal E=\mathcal O_{X_A/\mathcal Y, crys}$.\newline \indent
In order to study the sheaves $R^m f_{X_A/\mathcal Y, crys*}\mathcal O_{X/\mathcal Y,crys}$, we fix a Hyodo-Kato embedding system $(\mathcal P_{\bullet},\mathcal M_{\bullet})$ of an \'etale hypercovering $(X_{\bullet},M_{\bullet})$ of the log-scheme $(X_A,M_A)$. It always exists, as stated in \cite{[HK94]} (the definition of simplicial schemes and \'etale hypercoverings can be found in \cite[Section 7.2]{[CT03]}). Then we have the following diagram:

\begin{equation}\label{bigdiagram}
\begindc{0}[700]
\obj(0,0)[Pt]{$s^\times$}
\obj(0,1)[SF]{$(X_0,M_0)$}
\obj(0,2)[SFC]{$(X_{0,\bullet},M_{0,\bullet})$}
\obj(1,0)[YY]{$(Y,N)$}
\obj(1,1)[XA]{$(X_A,M_A)$}
\obj(1,2)[CO]{$(X_{\bullet},M_{\bullet})$}
\obj(2,0)[FY]{$(\mathcal Y,\mathcal N)$}
\obj(2,2)[FC]{$(\mathcal P_{\bullet},\mathcal M_{\bullet})$}
\mor{SFC}{CO}{}[\atleft,\solidarrow]
\mor{SFC}{SF}{$\theta_s$}[\atright,\solidarrow]
\mor{SF}{XA}{}[\atleft,\solidarrow]
\mor{SF}{Pt}{$f_s$}[\atright,\solidarrow]
\mor{Pt}{YY}{}[\atright,\solidarrow]
\mor{CO}{FC}{$i_{\bullet}$}[\atleft,\solidarrow]
\mor{CO}{XA}{$\theta$}[\atright,\solidarrow]
\mor{XA}{YY}{$f$}[\atleft,\solidarrow]
\mor{YY}{FY}{}[\atright,\solidarrow]
\mor{FC}{FY}{$g$}[\atleft,\solidarrow]
\enddc
\end{equation}
where $(X_{0,\bullet},M_{0,\bullet})$ is the fiber product in the upper left square.\newline \indent
We want to see that the sheaves $R^m f_{X_A/\mathcal Y,crys *}(\mathcal O_{X/\mathcal Y,crys})$ satisfy some finiteness properties. For each $n\in \mathbb N$, denote by $\mathcal Y_n$ the reduction of $\mathcal Y$ modulo $p^n$, and $C_{X_\bullet/\mathcal Y_n}$ the logarithmic De Rham complex of the log PD-envelope of the closed immersion $i_\bullet$ over $(\mathcal Y_n,\mathcal N_n)$. Then we have the following:

\begin{lemma}
\begin{enumerate}[(a)]
\item
For each $n$, there is a canonical quasi-isomorphism $$R(f\theta)_* C_{X_\bullet/\mathcal Y_n}\otimes_{\mathcal O_{\mathcal Y_n}}^L\mathcal O_{\mathcal Y_{n-1}}\stackrel{\sim}{\longrightarrow} R (f\theta)_*C_{X_\bullet/\mathcal Y_{n-1}}.$$
\item
For each $n$, $R(f\theta)_*C_{X_\bullet/\mathcal Y_n}$ is bounded and has finitely generated cohomologies.
\end{enumerate}
\end{lemma}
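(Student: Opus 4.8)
The plan is to reduce both parts to standard properties of the logarithmic PD-de~Rham complex attached to the Hyodo--Kato embedding system and to the behaviour of proper pushforward. By construction of the embedding system, $(\mathcal P_\bullet,\mathcal M_\bullet)$ is log-smooth over $(\mathcal Y,\mathcal N)$ and contains $(X_\bullet,M_\bullet)$ as an exact closed subscheme, so that, writing $\mathcal D_\bullet$ for the log PD-envelope of $i_\bullet$ and $\mathcal D_{\bullet,n}$ for its reduction modulo $p^n$, one has $C_{X_\bullet/\mathcal Y_n}=\mathcal D_{\bullet,n}\otimes_{\mathcal O_{\mathcal P_{\bullet,n}}}\Omega^{\bullet}_{(\mathcal P_{\bullet,n},\mathcal M_{\bullet,n})/(\mathcal Y_n,\mathcal N_n)}$. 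Two local facts will be used repeatedly: (1) since $i_\bullet$ is an exact closed immersion into something log-smooth over $\mathcal Y$, the sheaf $\mathcal D_{\bullet,n}$ is flat over $\mathcal O_{\mathcal Y_n}$ --- the logarithmic analogue of the flatness of PD-envelopes of regular immersions, which in our case can be checked on the chart $\mathbb N\hookrightarrow\mathbb N^r$ already used in the previous lemma; and (2) the formation of $\mathcal D_\bullet$ and of the log-de~Rham complex commutes with base change, so that $\mathcal D_{\bullet,n}\otimes_{\mathcal O_{\mathcal Y_n}}\mathcal O_{\mathcal Y_{n-1}}\cong\mathcal D_{\bullet,n-1}$ and the analogous statement holds for $\Omega^{\bullet}$.

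For part (a): by (1), each term of $C_{X_\bullet/\mathcal Y_n}$ is $\mathcal O_{\mathcal Y_n}$-flat, hence the derived tensor product $C_{X_\bullet/\mathcal Y_n}\otimes^{L}_{\mathcal O_{\mathcal Y_n}}\mathcal O_{\mathcal Y_{n-1}}$ is computed by the termwise tensor product, and by (2) the latter is canonically isomorphic, as a complex, to $C_{X_\bullet/\mathcal Y_{n-1}}$. It then remains to commute $R(f\theta)_*$ past $\otimes^{L}_{\mathcal O_{\mathcal Y_n}}\mathcal O_{\mathcal Y_{n-1}}$, that is, to invoke the projection formula $R(f\theta)_*\!\big(\mathcal F\otimes^{L}_{\mathcal O_{\mathcal Y_n}}\mathcal O_{\mathcal Y_{n-1}}\big)\cong R(f\theta)_*\mathcal F\otimes^{L}_{\mathcal O_{\mathcal Y_n}}\mathcal O_{\mathcal Y_{n-1}}$ with $\mathcal F=C_{X_\bullet/\mathcal Y_n}$. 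Since $f$ is proper and $\theta$ is an \'etale hypercovering, $R(f\theta)_*$ commutes with small direct sums, which is precisely the condition under which the projection formula is valid for an arbitrary coefficient complex; alternatively one can simply appeal to Shiho's relative base-change theorem in \cite{[Sh08]} applied to the exact closed immersion $\mathcal Y_{n-1}\hookrightarrow\mathcal Y_n$. Either way one obtains the desired quasi-isomorphism, and its canonicity is built into the construction.

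For part (b): the complex $C_{X_\bullet/\mathcal Y_n}$ is itself bounded because $\Omega^1_{(\mathcal P_\bullet,\mathcal M_\bullet)/(\mathcal Y,\mathcal N)}$ is locally free of finite rank, so that $C^i_{X_\bullet/\mathcal Y_n}$ vanishes above the relative log dimension. For the pushforward, by the defining property of the embedding system $R(f\theta)_*C_{X_\bullet/\mathcal Y_n}$ computes the reduction modulo $p^n$ of $Rf_{X_A/\mathcal Y,\mathrm{crys}*}\mathcal O_{X_A/\mathcal Y,\mathrm{crys}}$, which vanishes in cohomological degrees larger than twice the relative dimension of $X_A$ over $Y$; hence it is bounded. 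Finally, the cohomology sheaves of $C_{X_\bullet/\mathcal Y_n}$ are coherent --- a local computation over the noetherian base $\mathcal Y_n$, the ring $\hat A$ being noetherian as the $p$-adic completion of a finite-type $W$-algebra --- and $f$ is proper over $\mathcal Y_n$, so Grothendieck's finiteness theorem, together with the (now bounded and convergent) descent spectral sequence of the hypercovering, shows that $R(f\theta)_*C_{X_\bullet/\mathcal Y_n}$ has finitely generated cohomologies; alternatively one may cite the finiteness statement of \cite{[Sh08]} directly.

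The step I expect to be the main obstacle is making the comparison in (a) fully rigorous: $\mathcal O_{\mathcal Y_{n-1}}$ has infinite Tor-dimension over $\mathcal O_{\mathcal Y_n}$, so the projection formula cannot be used in its ``perfect complex'' form and one must either argue via the direct-sum-preservation of $R(f\theta)_*$ --- which itself rests on the properness of $f$ and on cohomological descent for the simplicial scheme $X_\bullet$ --- or fall back on Shiho's base-change theorem; moreover, in both parts one must check that the unbounded simplicial index of the hypercovering does not destroy boundedness, which is exactly why the identification with relative log-crystalline cohomology in (b) has to be made first. The logarithmic flatness input (1), though standard, also requires the explicit chart computation rather than a black-box citation.
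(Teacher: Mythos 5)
The paper's own proof of this lemma is a short citation argument: it identifies $R(f\theta)_*C_{X_\bullet/\mathcal Y_n}$ with $Rf_{X_A/\mathcal Y_n,\mathrm{crys},*}(\mathcal O_{X_A/\mathcal Y_n,\mathrm{crys}})$ using \cite[Section 1]{[Sh08]}, deduces (a) from the claim inside the proof of Theorem 1.15 of \cite{[Sh08]}, and proves (b) by induction on $n$: for $n=1$ one has $\mathcal Y_1=Y$ and properness of $f$ gives finiteness, while the inductive step uses the second half of the same claim. So your fallback option (reduce to relative log-crystalline cohomology and quote Shiho) is exactly the paper's route, and your local input --- flatness of the log PD-envelope over $\mathcal O_{\mathcal Y_n}$ and its compatibility with reduction, checked on the chart $\mathbb N\rightarrow\mathbb N^r$ --- is indeed the mechanism behind Shiho's claim (it is the log analogue of the Berthelot--Ogus base-change computations in \cite{[BO78]}, cf.\ \cite{[HK94]}).

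Your primary ``direct'' route, however, has two concrete gaps as written. First, in (a), commutation of $R(f\theta)_*$ with small direct sums is not the condition you need: $R(f\theta)_*$ is the pushforward from an augmented simplicial scheme, hence involves a totalization over the simplicial direction, and since $\mathcal O_{\mathcal Y_{n-1}}$ has infinite Tor-dimension over $\mathcal O_{\mathcal Y_n}$ the functor $-\otimes^L_{\mathcal O_{\mathcal Y_n}}\mathcal O_{\mathcal Y_{n-1}}$ must be computed with an unbounded-below flat resolution; commuting this with the totalization requires a convergence argument (uniformly finite cohomological dimension in each simplicial degree, or first descending to $X_A$), which does not follow from properness of $f$ and \'etaleness of $\theta$ alone. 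Second, in (b), Grothendieck's finiteness theorem cannot be applied level by level: the schemes $X_m$ of the \'etale hypercovering are only \'etale over $X_A$, hence not proper over $\mathcal Y_n$, and the descent spectral sequence has unboundedly many columns, so finiteness of the abutment is not formal. (Also, $R(f\theta)_*C_{X_\bullet/\mathcal Y_n}$ computes $Rf_{X_A/\mathcal Y_n,\mathrm{crys},*}$ over $\mathcal Y_n$; calling it ``the reduction mod $p^n$'' of the cohomology over $\mathcal Y$ presupposes a base-change statement of the very kind part (a) is establishing.) Both difficulties disappear once you make the move you list only as an alternative the first step: identify $R(f\theta)_*C_{X_\bullet/\mathcal Y_n}$ with relative log-crystalline cohomology of $X_A/\mathcal Y_n$ by cohomological descent, and then either quote the claim in the proof of \cite[Theorem 1.15]{[Sh08]} or rerun the Berthelot--Ogus-style argument with local log-smooth liftings. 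With that reordering your argument is correct and essentially coincides with the paper's.
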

\begin{proof}
In \cite[Section 1]{[Sh08]}, it is proved that $$R(f\theta)_* C_{X_\bullet/\mathcal Y_n}\cong Rf_{X_\bullet/\mathcal Y_n,crys,*}(\mathcal O_{X_\bullet/\mathcal Y_n,crys}),$$ and so part (a) follows from the claim in the proof of theorem 1.15 in \cite{[Sh08]}.\newline \indent
For part (b), we proceed inductively. Note that for $n=1$, $\mathcal Y_1=Y$, and so the result follows by properness of $f$. The inductive step is direct using the second part of the same claim used in (a).
\end{proof}

The preceding lemma says that $\{R(f\theta)_*C_{X_\bullet/\mathcal Y_n}\}_n$ is a consistent system, as defined in \cite{[BO78]}. Then by \cite[Corollary B.9]{[BO78]}, it follows that $$Rf_{X_A/\mathcal Y,crys *}(\mathcal O_{X_A/\mathcal Y,crys})=R \lim_{\longleftarrow}Rf_{X_A/\mathcal Y_n,crys,*}(\mathcal O_{X_A/\mathcal Y_n,crys})$$ is bounded above and has finitely generated cohomologies. Thus we have the following:
\begin{theorem}\label{finitecrys}
The complex $Rf_{X_A/\mathcal Y,crys *} (\mathcal O_{X_A/\mathcal Y,crys})$ is a perfect complex of isocoherent sheaves on $\mathcal Y$. Moreover, the isocoherent cohomology sheaf $$R^mf_{X_A/\mathcal Y,crys *}(\mathcal O_{X_A/\mathcal Y,crys})$$ admits a Frobenius structure for each $m$.\end{theorem}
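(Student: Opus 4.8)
The plan is to derive both statements from the previous lemma together with the Hyodo--Kato embedding system and its Frobenius. The discussion preceding the statement already shows that $Rf_{X_A/\mathcal Y,crys *}(\mathcal O_{X_A/\mathcal Y,crys})$ is bounded above and has coherent cohomology sheaves on $\mathcal Y$; what remains is to promote this to perfectness (the only genuinely new input) and to produce the Frobenius structure.

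\emph{Perfectness.} Recall that by \cite[Corollary B.9]{[BO78]} the complex in question is $R\lim_n R(f\theta)_* C_{X_\bullet/\mathcal Y_n}$. I would add two observations. First, the complexes $R(f\theta)_* C_{X_\bullet/\mathcal Y_n}$ are bounded \emph{uniformly} in $n$: each is bounded by the lemma, and the bounds only involve the underlying simplicial scheme $X_\bullet$, the embedding dimension, the relative dimension of the log-smooth map $f$, and the cohomological dimension of the proper map $f\theta$ --- all independent of $n$. Since $R\lim$ over $\mathbb N$ has cohomological dimension $\le 1$, the limit is then bounded. Second, $\hat A$ --- being the $p$-adic completion of the smooth $W$-algebra $A_0$ --- is regular, Noetherian, and of locally finite Krull dimension, so $D^b_{\mathrm{coh}}(\hat A) = D^{\mathrm{perf}}(\hat A)$. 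Hence $Rf_{X_A/\mathcal Y,crys *}(\mathcal O_{X_A/\mathcal Y,crys})$ is a perfect complex of $\mathcal O_{\mathcal Y}$-modules, and inverting $p$ (passing to the isogeny category) makes it a perfect complex of isocoherent sheaves on $\mathcal Y$. Equivalently, one can combine part (a) of the lemma --- which identifies the derived reduction modulo $p$ with $Rf_{X_A/Y,crys *}(\mathcal O_{X_A/Y,crys})$, perfect because $Y = \Spec A$ is regular and, by part (b) at $n=1$, this complex is bounded with coherent cohomology --- with the standard perfectness criterion for $p$-adically complete complexes.

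\emph{Frobenius structure.} Here I would follow the Hyodo--Kato construction. Since $\mathcal Y = \Spf \hat A$ is affine and formally smooth over $\Spf W$, there is a lift of Frobenius $\Phi\colon \mathcal Y \to \mathcal Y$, lying over the canonical Frobenius $\sigma$ of $W$ and reducing modulo $p$ to the absolute Frobenius of $Y$; using formal smoothness once more one may arrange $\Phi^*(t) = t^p$, so that $\Phi$ extends to an endomorphism of $(\mathcal Y, \mathcal N)$ which is multiplication by $p$ on the chart $\mathbb N$ of $\mathcal N$. The embedding system $(\mathcal P_\bullet, \mathcal M_\bullet)$ may, as in \cite{[HK94]}, be taken to carry a compatible Frobenius $\phi_\bullet\colon (\mathcal P_\bullet, \mathcal M_\bullet) \to (\mathcal P_\bullet, \mathcal M_\bullet)$ lying over $\Phi$ and lifting the absolute Frobenius of $(X_\bullet, M_\bullet)$, by (formal) log-smoothness of $(\mathcal P_\bullet, \mathcal M_\bullet)$ over $(\mathcal Y, \mathcal N)$. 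Then $\phi_\bullet$ induces a $\Phi$-semilinear morphism on the logarithmic de Rham complexes $C_{X_\bullet/\mathcal Y_n}$, compatibly in $n$, hence on $C_{X_\bullet/\mathcal Y} := R\lim_n C_{X_\bullet/\mathcal Y_n}$; applying $R(f\theta)_*$ and the identification $R(f\theta)_* C_{X_\bullet/\mathcal Y} \cong Rf_{X_A/\mathcal Y,crys *}(\mathcal O_{X_A/\mathcal Y,crys})$ used in the proof of the lemma, we obtain a $\Phi$-semilinear endomorphism of $Rf_{X_A/\mathcal Y,crys *}(\mathcal O_{X_A/\mathcal Y,crys})$ in the derived category, and consequently on each cohomology sheaf $R^m f_{X_A/\mathcal Y,crys *}(\mathcal O_{X_A/\mathcal Y,crys})$. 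After $\otimes \mathbb Q$, this endomorphism is independent of the choices of $\Phi$ and $\phi_\bullet$ --- any two choices are intertwined by the canonical homotopy coming from the PD-structure, the usual argument that crystalline Frobenius is well defined --- so it defines the required Frobenius structure on the isocoherent sheaf $R^m f_{X_A/\mathcal Y,crys *}(\mathcal O_{X_A/\mathcal Y,crys})$.

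I expect the main obstacle to be the perfectness statement, specifically making the two reductions watertight: one must check that boundedness below genuinely survives the passage to $R\lim$ --- which is precisely why the \emph{uniform} amplitude bound on the $R(f\theta)_* C_{X_\bullet/\mathcal Y_n}$, together with $\mathrm{cd}(R\lim) \le 1$, is needed --- and that $\hat A$ is regular of locally finite Krull dimension so that bounded complexes with coherent cohomology are automatically perfect. The Frobenius part is comparatively routine Hyodo--Kato bookkeeping; the only delicate points there are the compatibility of the chosen Frobenius lift with the log structure (the equality $\Phi^*(t) = t^p$) and the independence of the construction from the embedding system and the Frobenius lift.
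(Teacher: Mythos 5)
Your perfectness argument is a workable, more hands-on substitute for what the paper does in one line: the paper combines the consistent-system discussion (via \cite[Corollary B.9]{[BO78]}) with \cite[Theorem 1.16]{[Sh08]}, which already packages the limit and descent bookkeeping, whereas you re-derive perfectness from a uniform amplitude bound for $R(f\theta)_*C_{X_\bullet/\mathcal Y_n}$, $\mathrm{cd}(R\lim)\le 1$, and regularity of $\hat A$ so that bounded coherent complexes are perfect. The uniformity claim for the simplicial pushforward needs an actual truncation/cohomological-descent argument rather than the assertion that ``the bounds only involve data independent of $n$,'' but this part of your route is essentially sound.

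The genuine gap is in the Frobenius half. In this paper, ``admits a Frobenius structure'' must mean what is needed downstream to apply \cite{[CT12]}: a $\Phi$-semilinear map that becomes an \emph{isomorphism} after $\otimes\,\mathbb Q$, i.e.\ an $F$-isocrystal structure $\Phi^*R^mf_{X_A/\mathcal Y,crys*}(\mathcal O)\otimes\mathbb Q\;\xrightarrow{\ \sim\ }\;R^mf_{X_A/\mathcal Y,crys*}(\mathcal O)\otimes\mathbb Q$. Your construction only produces the semilinear endomorphism by functoriality of Frobenius lifts; nowhere do you address why it is an isogeny, and that is precisely the substantive input of the paper's proof: the morphism $f$ is of Cartier type (visible from the chart $Q=\mathbb N\to P=\mathbb N^r$, the diagonal map), and it is Hyodo--Kato's theorem \cite{[HK94]} for log-smooth morphisms of Cartier type --- resting on the logarithmic Cartier isomorphism --- that yields invertibility of Frobenius up to isogeny. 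Without this, your endomorphism could a priori fail to be invertible and the later arguments relying on the $F$-isocrystal structure (weights, comparison with \cite{[CT12]}) would not go through. A smaller, repairable issue: lifting the absolute Frobenius of $X_\bullet$ to the simplicial embedding system $(\mathcal P_\bullet,\mathcal M_\bullet)$ compatibly with all face and degeneracy maps does not follow from degreewise formal log-smoothness alone; the standard remedy is to define the map by functoriality of the log-crystalline topos over $(\mathcal Y,\mathcal N)$ rather than by chosen lifts, as the paper implicitly does by citing \cite{[HK94]}.
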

\begin{proof}
The first assertion follows from the above paragraph and \cite[Theorem 1.16]{[Sh08]}. The Frobenius structure is given by \cite{[HK94]}, since $f$ is of Cartier type. Indeed, recall that $f$ has a local chart $(P_{X_A}\rightarrow M_A,Q_Y\rightarrow N,Q\rightarrow P)$ given by $Q=\mathbb N$, $P=\mathbb N^r$, and $Q\rightarrow P$ the diagonal map.
\end{proof}

Now let us consider the following commutative diagram, where all squares are cartesian:

\begin{equation}\label{basechange}
\begindc{0}[700]
\obj(0,0)[XA]{$(X_A,M_A)$}
\obj(1,0)[YY]{$(Y,N)$}
\obj(0,1)[X0]{$(X_0,M_0)$}
\obj(1,1)[Pt]{$s^\times$}
\obj(2,0)[FY]{$(\mathcal Y,\mathcal N)$}
\obj(2,1)[FP]{$\Spf W^\times$}
\mor{X0}{XA}{}[\atleft,\solidarrow]
\mor{X0}{Pt}{$f_s$}[\atleft,\solidarrow]
\mor{XA}{YY}{$f$}[\atright,\solidarrow]
\mor{Pt}{YY}{}[\atright,\solidarrow]
\mor{Pt}{FP}{}[\atleft,\solidarrow]
\mor{YY}{FY}{$\iota$}[\atright,\solidarrow]
\mor{FP}{FY}{$\varphi$}[\atleft,\solidarrow]
\enddc
\end{equation}

By \cite[Theorem 1.19]{[Sh08]}, we have the following base change property.
\begin{theorem}
In diagram (\ref{basechange}), there is a quasi-isomorphism
\[
L\varphi^* Rf_{X_A/\mathcal Y,crys *}(\mathcal O_{X_A/\mathcal Y,crys}) \stackrel{\sim}{\longrightarrow} Rf_{s,X_0/W,crys,*}(\mathcal O_{X_0/W,crys}).
\]
\end{theorem}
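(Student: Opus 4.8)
The plan is to deduce the statement directly from Shiho's base-change theorem \cite[Theorem 1.19]{[Sh08]}, checking that the geometric situation we have set up satisfies its hypotheses. First I would recall the general form of that theorem: given a log-smooth morphism $f:(X_A,M_A)\to(Y,N)$, an exact closed immersion of $(Y,N)$ into a $p$-adic formal log scheme $(\mathcal Y,\mathcal N)$ that is log-smooth over the base, and a base change along a morphism $(\mathcal Y', \mathcal N')\to(\mathcal Y,\mathcal N)$ from another such formal log scheme, there is a canonical quasi-isomorphism $L\varphi^{*}Rf_{X_A/\mathcal Y,crys*}\mathcal O_{X_A/\mathcal Y,crys}\xrightarrow{\sim}Rf'_{X'/\mathcal Y',crys*}\mathcal O_{X'/\mathcal Y',crys}$, where $(X',M')$ is the fiber product $(X_A,M_A)\times_{(Y,N)}(Y',N')$ with $(Y',N')$ the reduction of $(\mathcal Y',\mathcal N')$.

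The main work is then to identify the two vertices of the target in diagram \eqref{basechange} with the correct pieces of Shiho's theorem. The bottom arrow $\varphi:\Spf W^{\times}\to(\mathcal Y,\mathcal N)$ is the composite $\Spf W^{\times}\to(\mathcal Y,\mathcal N)$ classifying the closed point $s\in Y$: concretely $\hat A\to W$ sending $t\mapsto 0$, with the log structure $\mathcal N$ given by $1\mapsto t$ pulling back to the log structure $1\mapsto 0$ on $W$, which is exactly $W^{\times}$ as fixed in section \ref{Notation}. One checks that $\Spf W^{\times}$ is log-smooth (indeed, the standard log point lifts to the standard formal log point), that the closed immersion $s^{\times}\hookrightarrow\Spf W^{\times}$ is exact, and that the square in \eqref{basechange} whose corners are $(X_0,M_0)$, $s^{\times}$, $(Y,N)$, $(X_A,M_A)$ is cartesian in the category of (fine) log schemes — this is precisely how $(X_0,M_0)$ was defined in \eqref{setting} and \eqref{bigdiagram}. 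Hence $(X_0,M_0)=(X_A,M_A)\times_{(Y,N)}s^{\times}$ plays the role of $(X',M')$, and Shiho's theorem yields the quasi-isomorphism $L\varphi^{*}Rf_{X_A/\mathcal Y,crys*}(\mathcal O_{X_A/\mathcal Y,crys})\xrightarrow{\sim}Rf_{s,X_0/W,crys,*}(\mathcal O_{X_0/W,crys})$ verbatim.

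I expect the only genuine obstacle to be a bookkeeping one: verifying that all the squares in \eqref{basechange} really are cartesian \emph{as log schemes} (not merely on underlying schemes), since fiber products in the log category can differ, and that the log structures $\mathcal N$, $\mathcal N|_{W}$, $M_A$, $M_0$ are the ones Shiho's formalism requires (in particular that the closed immersions $(Y,N)\hookrightarrow(\mathcal Y,\mathcal N)$ and $s^{\times}\hookrightarrow\Spf W^{\times}$ are exact). These were essentially arranged when the diagrams were constructed — the log structures on $X_A$ and $Y$ come from the NCDs $X_{A,t=0}$ and $Y_0$, and restricting to $t=0$ gives the induced log structures on $X_0$ and $s$ — so the check is routine but must be done. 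Once compatibility is confirmed, there is nothing left: the statement is the literal output of \cite[Theorem 1.19]{[Sh08]} applied to our diagram, and I would simply cite it, as the theorem statement already does.
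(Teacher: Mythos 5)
Your proposal matches the paper exactly: the paper offers no argument beyond invoking Shiho's base-change theorem \cite[Theorem 1.19]{[Sh08]}, relying on the fact that diagram (\ref{basechange}) was set up (via (\ref{setting}) and (\ref{bigdiagram})) so that its hypotheses — exact closed immersions, the log fiber product defining $(X_0,M_0)$, and the proper log-smooth family $f$ — hold by construction. Your additional bookkeeping about cartesianness in the log category and compatibility of log structures is exactly the routine verification the paper leaves implicit, so the approach is essentially identical.
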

Note that $Rf_{s,X_0/W,crys,*}(\mathcal O_{X_0/W,crys})$ is a perfect $K_0$-complex that gives the cohomology $$H_{log-crys}^i((X_0,M_0)/W^\times)\otimes K_0.$$

\subsection{Relative Log Convergent Cohomology}

Following \cite{[Sh08]}, we study the relative log convergent cohomology sheaves there defined. Again, we work only with the trivial isocrystal $\mathcal O_{X_A/\mathcal Y,conv}$, on the log convergent site, and denote the sheaves of relative cohomology by $$Rf_{X_A/\mathcal Y, conv *}(\mathcal O_{X_A/\mathcal Y,conv}).$$ \newline \indent 
Recall that there is a canonical functor (as in \cite{[Sh08]}) from the category of isocrystals on the relative log convergent site to that on the log crystalline site
\[
\Phi:I_{conv} ((X_A/\mathcal Y)_{conv}^{log}) \longrightarrow I_{crys} ((X_A/\mathcal Y)_{crys}^{log})
\]
sending locally free isocrystals on $(X_A/\mathcal Y)_{conv}^{log}$ to locally free isocrystals on $(X_A/\mathcal Y)_{crys}^{log}$. In particular, $\Phi(\mathcal O_{X_A/\mathcal Y,conv})=\mathcal O_{X_A/\mathcal Y,crys}$.\newline \indent
Now let us go back to the situation in diagram (\ref{bigdiagram}). Let $]X_\bullet[_{\mathcal P_\bullet}^{log}$ be the log tube of the closed immersion $i_\bullet$, and $\widehat{\mathcal P_\bullet}$ the completion of $\mathcal P_\bullet$ along $X_\bullet$. Then, as in \cite{[CT12]}, we have a specialization map
\[
\textrm{sp}:\textrm{}]X_\bullet[_{\mathcal P_\bullet}^{log} \rightarrow \widehat{\mathcal P_\bullet}.
\]
Moreover, if we denote by $\Omega_{]X_\bullet[_{\mathcal P_\bullet}^{log}/\mathcal Y_K}^\bullet \langle \mathcal M_\bullet / \mathcal N \rangle$
the logarithmic De Rham complex of the simplicial rigid analytic space $]X_\bullet[_{\mathcal P_\bullet}^{log}$ over the generic fiber $\mathcal Y_{K_0}$ of $\mathcal Y$, then by \cite[Corollary 2.34]{[Sh08]}, we have
\[
Rf_{X_A/\mathcal Y,conv *}(\mathcal O_{X_A/\mathcal Y, conv}) \cong R(f\theta)_* \textrm{sp}_*\Omega_{]X_\bullet[_{\mathcal P_\bullet}^{log}/\mathcal Y_{K_0}}^\bullet \langle \mathcal M_\bullet / \mathcal N \rangle.
\]
Now by using the remarks in \cite[p. 31]{[Sh08]} and passing to the projective limit, we have a canonical morphism of complexes
\begin{equation}\label{convcrys}
\textrm{sp}_*\Omega_{]X_\bullet[_{\mathcal P_\bullet}^{log}/\mathcal Y_{K_0}}^\bullet \langle \mathcal M_\bullet / \mathcal N \rangle \longrightarrow \lim_{\stackrel{\longleftarrow}{n}} C_{X_\bullet/\mathcal Y_n},
\end{equation}
which by \cite[Theorem 2.36]{[Sh08]} gives the following:
\begin{theorem}
The canonical morphism (\ref{convcrys}) induces an isomorphism
\[
R^m f_{X_A/\mathcal Y,conv *}(\mathcal O_{X_A/\mathcal Y, conv}) \cong R^m f_{X_A/\mathcal Y,crys *}(\mathcal O_{X_A/\mathcal Y,crys})
\]
of isocoherent sheaves on $\mathcal Y$.
\end{theorem}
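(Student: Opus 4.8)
The plan is to deduce the isomorphism from a quasi-isomorphism of the complexes computing the two cohomologies, using the explicit de Rham descriptions already recorded above. On the convergent side, \cite[Corollary 2.34]{[Sh08]} gives
\[
Rf_{X_A/\mathcal Y,conv *}(\mathcal O_{X_A/\mathcal Y, conv}) \cong R(f\theta)_* \,\mathrm{sp}_*\,\Omega_{]X_\bullet[_{\mathcal P_\bullet}^{log}/\mathcal Y_{K_0}}^\bullet \langle \mathcal M_\bullet / \mathcal N \rangle,
\]
while on the crystalline side the preceding lemma together with \cite[Corollary B.9]{[BO78]} gives
\[
Rf_{X_A/\mathcal Y,crys *}(\mathcal O_{X_A/\mathcal Y,crys}) \cong R\varprojlim_{n} R(f\theta)_* C_{X_\bullet/\mathcal Y_n}.
\]
So it suffices to show that the morphism (\ref{convcrys}), namely $\mathrm{sp}_*\Omega_{]X_\bullet[_{\mathcal P_\bullet}^{log}/\mathcal Y_{K_0}}^\bullet \langle \mathcal M_\bullet / \mathcal N \rangle \to \varprojlim_n C_{X_\bullet/\mathcal Y_n}$ becomes an isomorphism on cohomology after applying $R(f\theta)_*$ and inverting $p$. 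Since the terms $C_{X_\bullet/\mathcal Y_n}$ are coherent and the system $\{R(f\theta)_*C_{X_\bullet/\mathcal Y_n}\}_n$ is consistent (so the relevant Mittag--Leffler conditions hold), $R(f\theta)_*$ commutes with the $p$-adic limits involved, and the problem reduces to comparing the two complexes \'etale-locally on $X_\bullet$ --- i.e. to a statement about the log PD-envelope versus the log tube of a single closed immersion $i$ of an affine piece of $X_A$ into a smooth $p$-adic formal scheme over $\mathcal Y$.

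Locally, using the chart $Q=\mathbb N \to P=\mathbb N^r$ of the log-smooth morphism $f$, one is reduced to a relative logarithmic Poincar\'e lemma: for the $p$-adic completion of $\Spec(A[x_1,\dots,x_n]/(x_1\cdots x_r-t))$ over $\mathcal Y$, the logarithmic de Rham complex computed on the rigid-analytic log tube over $\mathcal Y_{K_0}$ and the inverse limit of the logarithmic PD-de Rham complexes of the log PD-envelopes over the $\mathcal Y_n$ have the same cohomology after $\otimes\mathbb Q$. Both compute the relative logarithmic de Rham cohomology of a ``log polydisc'' over the base; the divided-power completion and the convergent analytic neighbourhood differ only by overconvergent-versus-convergent power series, and after inverting $p$ over the \emph{smooth} base $\mathcal Y$ the two de Rham cohomologies agree in each degree. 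This is precisely the comparison of \cite[Theorem 2.36]{[Sh08]}; its proof proceeds by linearization and d\'evissage, reducing to the comparison over each infinitesimal thickening $\mathcal Y_n$ and ultimately to the classical divided-power Poincar\'e lemma, together with a check that the map (\ref{convcrys}) is compatible with the specialization maps $\mathrm{sp}$ so that the local quasi-isomorphisms are canonical.

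With the local quasi-isomorphism in hand, the global statement follows by cohomological descent along the \'etale hypercovering $(X_\bullet,M_\bullet)$ (the simplicial bookkeeping being that of \cite[Section 7.2]{[CT03]}): the degreewise quasi-isomorphism of simplicial de Rham complexes is transported to a quasi-isomorphism of their $R(f\theta)_*$-pushforwards, and passing to $H^m$ and tensoring with $\mathbb Q$ yields $R^m f_{X_A/\mathcal Y,conv *}(\mathcal O_{X_A/\mathcal Y, conv}) \cong R^m f_{X_A/\mathcal Y,crys *}(\mathcal O_{X_A/\mathcal Y,crys})$. That these are isocoherent sheaves on $\mathcal Y$ follows for the crystalline side from Theorem \ref{finitecrys}, and the isomorphism then transports that structure to the convergent side.

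The step I expect to be the main obstacle is the relative local comparison above. In the absolute case (base a point) it is the classical convergent-versus-crystalline comparison of Berthelot and Ogus, but over a positive-dimensional smooth base $\mathcal Y$ one must control the analytic neighbourhoods and PD-envelopes \emph{uniformly in the base directions} while keeping track of the relative log structures $\mathcal M_\bullet/\mathcal N$ throughout; this is the content, and the delicate part, of \cite[Theorem 2.36]{[Sh08]}. Everything else --- the reduction to the affine-local situation, the $p$-adic limit arguments, and the simplicial descent --- is formal once the finiteness results of Section \ref{Cohomology} are available, which is exactly why that theorem is quoted rather than reproved here.
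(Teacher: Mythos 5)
Your proposal is correct and follows essentially the same route as the paper: the paper's proof is simply to apply Shiho's comparison theorem \cite[Theorem 2.36]{[Sh08]} to the canonical morphism (\ref{convcrys}), which is exactly the result your argument ultimately rests on (your additional local-to-global and Poincar\'e-lemma sketch is a description of the internals of that cited theorem rather than a new ingredient). The surrounding reductions you give (the identifications via \cite[Corollary 2.34]{[Sh08]} and \cite[Corollary B.9]{[BO78]}, and the isocoherence via Theorem \ref{finitecrys}) match the paper's setup.
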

In particular, by theorem \ref{finitecrys} this allows to prove that $Rf_{X_A/\mathcal Y,conv *}(\mathcal O_{X_A/\mathcal Y,conv})$ is a perfect complex of isocoherent sheaves, and a base change theorem:

\begin{theorem}
With the same notation as in diagram (\ref{basechange}), there is a natural isomorphism
\[
L\varphi^* R f_{X_A/\mathcal Y,conv *}(\mathcal O_{X_A/\mathcal Y,conv}) \cong R f_{s X_0/W, conv *}(\mathcal O_{X_0/ W,conv}).
\]
\end{theorem}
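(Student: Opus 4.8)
The plan is to deduce this from two results already in hand, namely the base change theorem for relative log crystalline cohomology attached to diagram (\ref{basechange}) (a consequence of \cite[Theorem 1.19]{[Sh08]}) and the comparison isomorphism $R^m f_{X_A/\mathcal Y,conv *}(\mathcal O_{X_A/\mathcal Y,conv}) \cong R^m f_{X_A/\mathcal Y,crys *}(\mathcal O_{X_A/\mathcal Y,crys})$ proved above (from \cite[Theorem 2.36]{[Sh08]}), rather than to rerun Shiho's descent argument directly in the convergent setting. First I would construct, exactly as in the crystalline case, a canonical base change morphism
\[
\beta_{conv}: L\varphi^* R f_{X_A/\mathcal Y,conv *}(\mathcal O_{X_A/\mathcal Y,conv}) \longrightarrow R f_{s,X_0/W,conv *}(\mathcal O_{X_0/W,conv}),
\]
for instance from the restriction of the logarithmic de Rham complex $\mathrm{sp}_*\Omega^\bullet_{]X_\bullet[_{\mathcal P_\bullet}^{log}/\mathcal Y_{K_0}}\langle \mathcal M_\bullet/\mathcal N\rangle$ along the generic fiber of $\varphi$, together with the description $R f_{X_A/\mathcal Y,conv *}(\mathcal O_{X_A/\mathcal Y,conv}) \cong R(f\theta)_*\mathrm{sp}_*\Omega^\bullet_{]X_\bullet[_{\mathcal P_\bullet}^{log}/\mathcal Y_{K_0}}\langle\mathcal M_\bullet/\mathcal N\rangle$ of \cite[Corollary 2.34]{[Sh08]}. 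The claim is that $\beta_{conv}$ is an isomorphism; perfectness of the target is then automatic, being the image under $L\varphi^*$ of a perfect complex.

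To show $\beta_{conv}$ is an isomorphism I would compare it with the crystalline base change morphism $\beta_{crys}$, which is an isomorphism by the crystalline base change theorem above. On the source, the convergent--crystalline comparison gives $L\varphi^* R f_{X_A/\mathcal Y,conv *}(\mathcal O_{X_A/\mathcal Y,conv}) \cong L\varphi^* R f_{X_A/\mathcal Y,crys *}(\mathcal O_{X_A/\mathcal Y,crys})$; on the target I would first establish the corresponding comparison $R f_{s,X_0/W,conv *}(\mathcal O_{X_0/W,conv}) \cong R f_{s,X_0/W,crys *}(\mathcal O_{X_0/W,crys})$ by applying \cite[Theorem 2.36]{[Sh08]} once more, this time to the proper, log-smooth morphism $f_s:(X_0,M_0)\to s^\times$ over the base formal log scheme $\Spf W^\times$ in place of $(\mathcal Y,\mathcal N)$; the hypotheses needed there (properness, log-smoothness, Cartier type) are inherited from $f$ via the chart $Q=\mathbb N$, $P=\mathbb N^r$, $Q\to P$ the diagonal, exactly as in the proof of theorem \ref{finitecrys}. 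Granting commutativity of the resulting square — with $\beta_{conv}$ and $\beta_{crys}$ on two sides and the two comparison isomorphisms on the other two — and using that $\beta_{crys}$ is an isomorphism, one concludes that $\beta_{conv}$ is an isomorphism as well.

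The only genuinely non-formal point is that commutativity, i.e. the compatibility of the two base change morphisms with the convergent--crystalline comparison morphism (\ref{convcrys}) over $\mathcal Y$ and over $W^\times$. I would check this at the level of the logarithmic de Rham complexes on the log tubes of a Hyodo-Kato embedding system: both base change morphisms are induced by restricting $\Omega^\bullet_{]X_\bullet[_{\mathcal P_\bullet}^{log}/\mathcal Y_{K_0}}\langle\mathcal M_\bullet/\mathcal N\rangle$ — and, on the crystalline side, the complexes $C_{X_\bullet/\mathcal Y_n}$ and their projective limit $\varprojlim_n C_{X_\bullet/\mathcal Y_n}$ — along $\varphi$ to the analogous complexes for an embedding system of $(X_{0,\bullet},M_{0,\bullet})$ over $W^\times$, and the morphism (\ref{convcrys}) from the analytic de Rham complex to $\varprojlim_n C_{X_\bullet/\mathcal Y_n}$ is functorial for such base extensions by the universal properties defining log PD-envelopes and log tubes; alternatively one may invoke directly the compatibility with base change of Shiho's functor $\Phi$ and of the specialization maps recorded in \cite{[Sh08]}. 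With that bookkeeping in place, the three isomorphisms compose to the asserted natural isomorphism.
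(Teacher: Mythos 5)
Your proposal is correct and follows essentially the same route as the paper, which deduces the convergent base change from the crystalline one: the paper simply invokes the comparison isomorphism of \cite[Theorem 2.36]{[Sh08]} together with the crystalline base change \cite[Theorem 1.19]{[Sh08]} (this is precisely Shiho's Corollary 2.38), while you spell out the construction of the base change morphism, the application of the comparison to $f_s$ over $\Spf W^\times$, and the compatibility square that the paper leaves implicit.
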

The complex $R f_{s X_0/W, conv *}(\mathcal O_{X_0/W,conv})$ gives the cohomology $$H_{log-conv}^i ((X_0,M_0)/W^\times).$$

\subsection{Relative Log Analytic Cohomology}
Now we study the sheaves of relative log analytic cohomology. Note that $g$ in diagram (\ref{bigdiagram}) induces a morphism $g_K^{ex}:]X_\bullet[_{\mathcal P_\bullet}^{log}\rightarrow \mathcal Y_K$. Then, the log analytic cohomolgy sheaves of $(X_A,M_A)/(Y,N)$ with respect to $(\mathcal Y,\mathcal N)$ can be computed by
\[
R^m f_{X_A/\mathcal Y,an *}(\mathcal O_{X_A/\mathcal Y,an}) = R^m g_{K *}^{ex}\Omega_{]X_\bullet[_{\mathcal P_\bullet}^{log}/\mathcal Y_{K_0}}^\bullet \langle \mathcal M_\bullet / \mathcal N \rangle.
\]
Then, by applying \cite[Theorem 4.6]{[Sh08]}, we have the following comparison theorem
\begin{theorem}
Let sp be the specialization map $\mathcal Y_K \rightarrow \mathcal Y$. Then for each $m$, $R^m f_{X_A/\mathcal Y, an *}(\mathcal O_{X_A/\mathcal Y,an})$ is a coherent sheaf on $\mathcal Y_K$, and there is an isomorphism
\[
\textrm{sp}_*R^m f_{X_A/\mathcal Y, an *}(\mathcal O_{X_A/\mathcal Y,an}) \cong R^m f_{X_A/\mathcal Y, conv *} (\mathcal O_{X_A/\mathcal Y,conv}).
\]
\end{theorem}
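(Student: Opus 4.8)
The statement is the log-analytic counterpart of the comparison theorems of the previous two subsections, and, like them, it is an application of \cite[Theorem 4.6]{[Sh08]}; the plan is to record the explicit de Rham presentations of both sides so that this reference applies directly. First I would fix, as in diagram (\ref{bigdiagram}), the Hyodo--Kato embedding system $(\mathcal P_\bullet,\mathcal M_\bullet)$ of an \'etale hypercovering $(X_\bullet,M_\bullet)$ of $(X_A,M_A)$, and recall the two descriptions already available on the single simplicial rigid analytic space $]X_\bullet[_{\mathcal P_\bullet}^{log}$: by definition the left-hand side is
\[
R^m f_{X_A/\mathcal Y,an *}(\mathcal O_{X_A/\mathcal Y,an}) = R^m g_{K *}^{ex}\,\Omega_{]X_\bullet[_{\mathcal P_\bullet}^{log}/\mathcal Y_{K_0}}^\bullet \langle \mathcal M_\bullet / \mathcal N \rangle
\]
on $\mathcal Y_K$, while by \cite[Corollary 2.34]{[Sh08]} the right-hand side is computed on $\mathcal Y$ as $R^m (f\theta)_*\,\textrm{sp}_*\,\Omega_{]X_\bullet[_{\mathcal P_\bullet}^{log}/\mathcal Y_{K_0}}^\bullet \langle \mathcal M_\bullet / \mathcal N \rangle$, where the relevant specialization maps are $g_K^{ex}\colon ]X_\bullet[_{\mathcal P_\bullet}^{log}\to\mathcal Y_K$, $\textrm{sp}\colon\mathcal Y_K\to\mathcal Y$ and $\textrm{sp}\colon ]X_\bullet[_{\mathcal P_\bullet}^{log}\to\widehat{\mathcal P_\bullet}$, fitting into a commutative square with $f\theta\colon\widehat{\mathcal P_\bullet}\to\mathcal Y$. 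In this way the theorem is reduced to a single statement about the complex $\Omega_{]X_\bullet[_{\mathcal P_\bullet}^{log}/\mathcal Y_{K_0}}^\bullet\langle\mathcal M_\bullet/\mathcal N\rangle$ and these maps.

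Next I would check that the hypotheses of \cite[Theorem 4.6]{[Sh08]} hold in our situation: $f\colon (X_A,M_A)\to(Y,N)$ is proper and log smooth by the log-smoothness lemma of Section \ref{Popescu}, it is of Cartier type via the explicit chart $Q=\mathbb N$, $P=\mathbb N^r$ with $Q\to P$ the diagonal map (as used already in theorem \ref{finitecrys}), and $(Y,N)\hookrightarrow(\mathcal Y,\mathcal N)$ is an exact closed immersion into a $p$-adic formal scheme formally smooth over $W$ with log structure $\mathcal N$ given by $1\mapsto t$. Granting this, \cite[Theorem 4.6]{[Sh08]} applies to the trivial isocrystal $\mathcal O_{X_A/\mathcal Y,an}$ and yields at once both the coherence of $R^m f_{X_A/\mathcal Y,an *}(\mathcal O_{X_A/\mathcal Y,an})$ on the rigid space $\mathcal Y_K$ and the comparison isomorphism
\[
\textrm{sp}_*R^m f_{X_A/\mathcal Y, an *}(\mathcal O_{X_A/\mathcal Y,an}) \cong R^m f_{X_A/\mathcal Y, conv *} (\mathcal O_{X_A/\mathcal Y,conv}).
\]
Unwinding what this amounts to: the coherence is rigid-analytic finiteness (Kiehl) for the proper morphism $g_K^{ex}$ applied to the complex with coherent terms $\Omega_{]X_\bullet[_{\mathcal P_\bullet}^{log}/\mathcal Y_{K_0}}^\bullet\langle\mathcal M_\bullet/\mathcal N\rangle$ — properness of $g_K^{ex}$ following from properness of $f$ — together with the finiteness of the convergent (equivalently crystalline) side already established in theorem \ref{finitecrys}; and the isomorphism is the map induced by the specialization maps on these de Rham complexes, checked to be a quasi-isomorphism termwise by using that $\textrm{sp}_*$ on the log tube is compatible with the pushforward along $f\theta\colon\widehat{\mathcal P_\bullet}\to\mathcal Y$ via the commutative square above and the properness of $f$.

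I expect the genuine work to be bookkeeping with the simplicial embedding system rather than a new difficulty: one must know that each of the three cohomology theories, a priori depending on the choice of $(\mathcal P_\bullet,\mathcal M_\bullet)$, is canonically independent of it and that the comparison morphisms are functorial for refinements of the embedding system, so that the isomorphism descends from the hypercover $(X_\bullet,M_\bullet)$ to $(X_A,M_A)$ itself. This cohomological descent and independence are part of the formalism developed in \cite{[Sh08]}, so it is legitimate to invoke them here; the one point requiring care is simply that the precise hypotheses of \cite[Theorem 4.6]{[Sh08]} are exactly the ones arranged for $f$ and for the exact closed immersion $(Y,N)\hookrightarrow(\mathcal Y,\mathcal N)$ in the preceding sections. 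Once this is in place the theorem follows.
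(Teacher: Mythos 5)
Your proposal is correct and follows essentially the same route as the paper: the paper likewise records the presentation of the log analytic cohomology sheaves via the logarithmic de Rham complex on the log tube $]X_\bullet[_{\mathcal P_\bullet}^{log}$ and then obtains both the coherence and the comparison with relative log convergent cohomology by a direct application of \cite[Theorem 4.6]{[Sh08]}, the hypotheses (properness, log smoothness, Cartier type, exact closed immersion into $(\mathcal Y,\mathcal N)$) having been arranged in the earlier sections exactly as you verify. Your additional unwinding of what Shiho's theorem amounts to is not needed for the argument, but it does not affect its correctness.
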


\section{Reduction to the case of a family over a curve}\label{Reduction}
Now that we have relative cohomology sheaves defined for the family over $Y$, we want to restrict those sheaves to a smaller family. Namely a family over a curve, in order to be in the same situation as in \cite{[CT12]}.\newline \indent
Let us first construct the curve that we shall use. As stated at the beginning of the preceding section, $A_0$ is a smooth $W$-algebra. Let $\widetilde Y=\Spec A_0$ and $S=\Spec W$. Since $Y\rightarrow \widetilde Y$ is a closed immersion, the image $\hat s$ of $s$ inside $\widetilde Y$ is a closed point. 
Since the natural morphism $\widetilde Y \rightarrow S$ is smooth, there exists an affine open neighborhood $\widetilde U$ of $\hat s$ and an \'etale morphism $\sigma: \widetilde U \rightarrow \mathbb A_W^{d}$ such that $\widetilde W \rightarrow S$ factors in the following way:
\[\begindc{0}[700]
\obj(0,1)[YY]{$\widetilde U$}
\obj(0,0)[SS]{$S$}
\obj(1,1)[Af]{$\mathbb A_W^{d}$}
\mor{YY}{Af}{$\sigma$}[\atleft,\solidarrow]
\mor{YY}{SS}{$$}[\atright,\solidarrow]
\mor{Af}{SS}{$$}[\atright,\solidarrow]
\enddc\]

Let us recall this construction. There exists an open affine subset $\widetilde U=\Spec (A_0)_g$ of $\widetilde Y$ such that the restriction of $\widetilde Y \rightarrow S$ is standard smooth. Moreover, we may assume (using the fact that the reduction modulo $p$ is smooth over $k[t]$) that we can write
\[
(A_0)_g=W[x_1,...,x_r,t]/(f_1,...,f_c),
\]
where the polynomial 
\[
\det \left[ \begin{array}{lcclll}
           \frac{\partial f_1}{\partial x_1} && \cdots && \frac{\partial f_c}{\partial x_1} \\
           \cdots && \cdots && \cdots \\
           \frac{\partial f_1}{\partial x_c} && \cdots && \frac{\partial f_c}{\partial x_c}
     \end{array}
\right]
\]
is invertible in $(A_0)_g$. Then, the morphism $W[x_{c+1},...,x_r,t]\rightarrow (A_0)_g$ is \'etale, and with $d=r+1-c$ we get the desired factorization.\newline \indent
Using this description it is clear how to construct a smooth curve $C_W$ inside $\widetilde U$, transversal to $(t=0)$ and passing through the point $\hat s$: by pulling back a curve with these properties inside $\mathbb A_W^d$. In particular its reduction $C$ modulo $p$ is a smooth curve inside $Y$, transversal to $(t=0)$ and passing throught the point $s$.\newline \indent
Let $N_C$ be the log structure on $C$ defined to make the closed immersion $(C,N_C)\rightarrow (Y,N)$ exact,  and then we have a sequence of exact closed immersions
\[
s^\times \rightarrow (C,N_C) \rightarrow (Y,N).
\]
Let $(X_C,M_C)=(X_A,M_A)\times_{(Y,N)}(C,N_C)$. Then, we have the following diagram, where all the squares are cartesian:

\[\begindc{0}[700]
\obj(0,1)[X0]{$(X_0,M_0)$}
\obj(0,0)[kk]{$s^\times$}
\obj(1,1)[XC]{$(X_C,M_C)$}
\obj(1,0)[CC]{$(C,N_C)$}
\obj(2,0)[YY]{$(Y,N)$}
\obj(2,1)[XA]{$(X_A,M_A)$}
\mor{X0}{kk}{$$}[\atleft,\solidarrow]
\mor{X0}{XC}{$$}[\atright,\solidarrow]
\mor{XC}{CC}{$$}[\atright,\solidarrow]
\mor{XC}{XA}{$$}[\atleft,\solidarrow]
\mor{kk}{CC}{}[\atleft,\solidarrow]
\mor{CC}{YY}{}[\atleft,\solidarrow]
\mor{XA}{YY}{}[\atleft,\solidarrow]
\enddc\]

Note that the family $(X_C,M_C) \rightarrow (C,N_C)$ is in the situation studied in \cite{[CT12]}. We denote by $\mathcal C$ the $p$-adic completion of $C_W$ along the special fiber $C$. Then $1\mapsto t$ defines a log structure $\mathcal N_{\mathcal C}$ on $\mathcal C$ and we have the following diagram

\[\begindc{0}[700]
\obj(0,1)[XC]{$(X_C,M_C)$}
\obj(0,0)[XA]{$(X_A,M_A)$}
\obj(1,1)[CC]{$(C,N)$}
\obj(1,0)[YY]{$(Y,N)$}
\obj(2,0)[CY]{$(\mathcal Y,\mathcal N)$}
\obj(2,1)[Cr]{$(\mathcal C, \mathcal N_{\mathcal C})$}
\mor{XC}{XA}{$$}[\atleft,\solidarrow]
\mor{XC}{CC}{$f_C$}[\atright,\solidarrow]
\mor{CC}{YY}{$$}[\atright,\solidarrow]
\mor{CC}{Cr}{$$}[\atleft,\solidarrow]
\mor{Cr}{CY}{$\iota$}[\atleft,\solidarrow]
\mor{YY}{CY}{}[\atleft,\solidarrow]
\mor{XA}{YY}{$f$}[\atleft,\solidarrow]
\enddc\]
Then by \cite[Theorem 1.19 and Corollary 2.38]{[Sh08]}, we have an isomorphism
\begin{equation}\label{1stisom}
L\iota^* Rf_{X_A/\mathcal Y, crys *}(\mathcal O_{X_A/\mathcal Y, crys}) \stackrel{\sim}{\longrightarrow} Rf_{C,X_C/\mathcal C,crys *} (\mathcal O_{X_C/\mathcal C,crys})
\end{equation}

Now consider the diagram

\[\begindc{0}[700]
\obj(0,1)[X0]{$(X_0,M_0)$}
\obj(0,0)[XC]{$(X_C,M_C)$}
\obj(1,1)[ss]{$s^\times$}
\obj(1,0)[CC]{$(C,N_C)$}
\obj(2,0)[Cr]{$(\mathcal C,\mathcal N_{\mathcal C})$}
\obj(2,1)[fs]{$\Spf W^\times$}
\mor{X0}{XC}{$$}[\atleft,\solidarrow]
\mor{XC}{CC}{$$}[\atright,\solidarrow]
\mor{X0}{ss}{$$}[\atright,\solidarrow]
\mor{ss}{CC}{$$}[\atleft,\solidarrow]
\mor{CC}{Cr}{$$}[\atleft,\solidarrow]
\mor{ss}{fs}{}[\atleft,\solidarrow]
\mor{fs}{Cr}{$\psi$}[\atleft,\solidarrow]
\enddc\]
where $\iota \circ \psi=\varphi$. Then we have an isomorphism
\begin{equation}\label{2ndisom}
L\psi^* Rf_{C,X_C/\mathcal C, crys *}(\mathcal O_{X_C/\mathcal C, crys}) \stackrel{\sim}{\longrightarrow} Rf_{s,X_0/W,crys *} (\mathcal O_{X_0/W,crys}).
\end{equation}
By combining the isomorphisms (\ref{1stisom}) and (\ref{2ndisom}), and the fact that $L\psi^* L \iota^* \cong L(\psi^*\iota^*) \cong L((\iota \circ \psi)^*)=L\psi^*$, we get that $Rf_{s,X_0/W,crys *} (\mathcal O_{X_0/W,crys})$ can be obtained from the family over $Y$ or over $C$. In particular, by the main result in \cite{[CT12]}, we get the following Clemens-Schmid type exact sequence: 
\begin{equation}\label{CSS}
\cdots \rightarrow H_{rig}^m(X_0) \rightarrow H_{log-crys}^m((X_0,M_0)/W^\times)\otimes K_0 \stackrel{N}{\rightarrow}  
\end{equation}
\[
H_{log-crys}^m((X_0,M_0)/W^\times)\otimes K_0(-1)\rightarrow H_{X_0,rig}^{m+2}(X_C) \rightarrow H_{rig}^{m+2}(X_0)\rightarrow \cdots
\]
The terms of the form $H_{X_0,rig}^{m+2}(X_C)$ depend a priori on the choice of the curve $C$, but if we choose a different smooth curve $C'$, by Poincar\'e duality \cite[Theorem 2.4]{[Be97]}, we have isomorphisms
\[
H_{X_0,rig}^{m+2}(X_C)\cong H_{X_0,rig}^{m+2}(X_{C'}) \cong H_{c,rig}^{2\dim X-m-2}(X_0)^\vee (-\dim X)
\] 
\[ \cong H_{2\dim X_0-m}^{rig}(X_0)(-\dim X),
\]
and we get a Clemens-Schmid type exact sequence that depends only on $X$ and the special fiber $X_0$ for our starting situation.

\section{Monodromy Criteria}\label{MonCrit}
As an application of the $p$-adic version of the Clemens-Schmid exact sequence, we prove a $p$-adic version of the Monodromy Criteria \cite[p.112]{[Mo84]}. We start with a situation in which he have an exact sequence of Clemens-Schmid type, as for example the situation in \cite{[CT12]}. Namely, suppose $k$ is a finite field and $C$ a smooth curve over $k$. We consider a proper and flat morphism $$f:X\rightarrow C,$$ where $X$ is a smooth variety of dimension $n+1$ over $k$. Moreover, we assume that there exists a $k$-rational point $s\in C$ such that the fiber of $f$ at $s$, which we denote by $X_s$ is a NCD. This defines a log structure $M$ on $X$. We denote by $(X_s,M_s)$ the log scheme with the induced log structure. \newline \indent
Then, the main result of \cite{[CT12]} states that there is a long exact sequence:

\[
\cdots \rightarrow H_{rig}^m (X_s) \rightarrow H_{log-crys}^m((X_s,M_s)/W^\times ) \otimes K_0 \rightarrow 
\]
\[
H_{log-crys}^m((X_s,M_s)/W^\times)\otimes K_0 (-1)\rightarrow H_{X_s,rig}^{m+2} (X)\rightarrow H_{rig}^{m+2}(X_s) \rightarrow \cdots
\]

We can consider the maps as morphisms of filtered vector spaces, where we give the weight filtration to each of them. Moreover, we know by the results in \cite[p.24]{[CT12]} that the weight filtration on the log-crystalline cohomology terms coincides with the monodromy one.\newline \indent
Now let us make a description of the filtration on $H_{rig}^m (X_s)$: denote by $X_1,...,X_r$ the irreducible components of $X_s$ and assume they are proper and smooth. Define the {\it codimension $p$ stratum of $X_s$} as
\[
X^{[p]}:=\bigsqcup_{i_0<\cdots < i_p} X_{i_0}\cap \cdots \cap X_{i_p}.
\] 

For each $a=0,...,p+1$, denote by $\delta_a : X^{[p+1]}\rightarrow X^{[p]}$ the natural map that restricted to each component is the inclusion
\[
X_{i_0} \cap \cdots \cap X_{i_{p+1}} \hookrightarrow X_{i_0} \cap \cdots \cap X_{i_{a-1}} \cap X_{i_{a+1}} \cap \cdots \cap X_{i_{p+1}}
\]
and define
\begin{equation}
\rho_p:=(-1)^{p}\sum_{a=0}^p (-1)^a \delta_a^*,
\end{equation}
where $\delta_a^*$ is the morphism of De Rham-Witt complexes $W_n \Omega_{X^{[p]}}^\bullet \rightarrow W_n \Omega_{X^{[p+1]}}^\bullet$ induced by $\delta_a$, where we identify $W_n \Omega_{X^{[p]}}^\bullet$ with its direct image in the \'etale site of $X_s$.\newline \indent
This gives a double complex
\begin{equation}
0\longrightarrow W_n \Omega_{X^{[0]}}^\bullet \stackrel{\rho_0}{\longrightarrow} W_n \Omega_{X^{[1]}}^\bullet \stackrel{\rho_1}{\longrightarrow} W_n \Omega_{X^{[2]}}^\bullet \stackrel{\rho_2}{\longrightarrow} \cdots
\end{equation}
and by taking projective limit, we get the double complex
\begin{equation}\label{dbcmplxrig}
0\longrightarrow W \Omega_{X^{[0]}}^\bullet \stackrel{\rho_0}{\longrightarrow} W \Omega_{X^{[1]}}^\bullet \stackrel{\rho_1}{\longrightarrow} W \Omega_{X^{[2]}}^\bullet \stackrel{\rho_2}{\longrightarrow} \cdots
\end{equation}
This allows to define a spectral sequence with
\begin{equation}\label{spsq}
E_1^{p,q}= H_{rig}^q(X^{[p]})
\end{equation}
with $d_1^{p,q}$ induced by $\rho_p$. 

\begin{theorem}\label{degconv}
The spectral sequence (\ref{spsq}) degenerates at $E_2$ and converges to $H_{rig}^* (X_s)$.
\end{theorem}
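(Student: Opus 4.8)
The plan is to view (\ref{spsq}) as the spectral sequence of the column filtration on the total complex $T:=\mathrm{Tot}\bigl(W\Omega_{X^{[\bullet]}}^\bullet\bigr)\otimes\mathbb Q$ of the double complex (\ref{dbcmplxrig}), and to treat its two assertions separately. Since $X^{[p]}$ is a smooth proper $k$-scheme of dimension $\dim X_s-p$, and is empty for $p>\dim X_s$, each column $W\Omega_{X^{[p]}}^\bullet$ is bounded and only finitely many columns are nonzero, so $T$ is a bounded complex supported in the first quadrant. Hence (\ref{spsq}) is a bounded spectral sequence that automatically converges to $H^*(T)$, and it remains to prove (i) that $H^*(T)\cong H_{rig}^*(X_s)$ and (ii) that $d_r=0$ for all $r\ge2$.

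For (ii) I would argue by weights. By hypothesis each irreducible component $X_i$ of $X_s$ is smooth and proper over the finite field $k$, and since $X_s$ is a normal crossing divisor every stratum $X^{[p]}=\bigsqcup_{i_0<\cdots<i_p}X_{i_0}\cap\cdots\cap X_{i_p}$ is again smooth and proper over $k$. By purity of rigid cohomology, $E_1^{p,q}=H_{rig}^q(X^{[p]})$ is then pure of weight $q$ as an $F$-isocrystal. The differential $d_1$ is induced by the Frobenius-equivariant maps $\delta_a^*$ and preserves the index $q$, so it is a morphism of pure weight-$q$ objects; hence $E_2^{p,q}$, a subquotient of $E_1^{p,q}$, is still pure of weight $q$. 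For $r\ge2$ the differential $d_r^{p,q}\colon E_r^{p,q}\to E_r^{p+r,q-r+1}$ is therefore a morphism of $F$-modules from a pure object of weight $q$ to a pure object of weight $q-r+1\ne q$, and any such morphism vanishes (its image would be simultaneously pure of two distinct weights). Thus $E_r=E_2$ for all $r\ge2$.

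For (i) I would produce a Mayer–Vietoris resolution along the closed cover of $X_s$ by its components. Writing $\epsilon\colon X^{[0]}=\bigsqcup_i X_i\to X_s$ for the normalization, the normalized Čech complex of $\epsilon$ is, for each $n$, the complex of pro-sheaves on the étale site of $X_s$
\[
0\longrightarrow W_n\Omega_{X_s}^\bullet \longrightarrow W_n\Omega_{X^{[0]}}^\bullet \stackrel{\rho_0}{\longrightarrow} W_n\Omega_{X^{[1]}}^\bullet \stackrel{\rho_1}{\longrightarrow} W_n\Omega_{X^{[2]}}^\bullet \longrightarrow \cdots,
\]
and this sequence is exact because, étale locally, $X_s$ is a union of coordinate hyperplanes in affine space, where the exactness is a direct combinatorial check (this is the resolution underlying the weight spectral sequences of \cite{[Mk93]} and \cite{[Na00]}). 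Taking derived global sections, then the projective limit over $n$, then $\otimes\mathbb Q$, identifies $T$ with $R\Gamma_{rig}(X_s)$; equivalently one invokes proper cohomological descent for rigid cohomology along $\epsilon$ (for the formalism of simplicial schemes and hypercoverings see \cite[Section 7.2]{[CT03]}) together with the de Rham–Witt computation of the rigid cohomology of the smooth proper strata. Either route also shows that the column filtration on $T$ induces on $H_{rig}^*(X_s)$ the expected filtration, which under the identifications of \cite{[CT12]} is the weight filtration used later.

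The main obstacle is precisely step (i): one must check that the de Rham–Witt complex of the singular variety $X_s$ is the correct object, that the displayed sequence is genuinely exact at the level of pro-sheaves, and that after inverting $p$ one lands on rigid cohomology carrying the desired filtration; this is where the local combinatorial structure of a normal crossing divisor and the descent results feeding \cite{[CT12]} are actually used. The $E_2$-degeneration in (ii), by contrast, is a soft consequence of purity of the $E_1$-page and the incompatibility of weights under the higher differentials, requiring no geometric input beyond smoothness and properness of the strata.
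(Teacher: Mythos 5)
Your proof is correct and follows essentially the same route as the paper: $E_2$-degeneration via purity of $H_{rig}^q(X^{[p]})$ for the smooth proper strata (you treat all $d_r$, $r\ge 2$, where the paper writes only $d_2$, a harmless sharpening), and convergence of (\ref{spsq}) to $H_{rig}^*(X_s)$ via identifying the total complex of (\ref{dbcmplxrig}) with rigid cohomology, which the paper simply quotes from \cite[Proposition 1.8 and Theorem 3.6]{[Ch99]} while you sketch the descent/Mayer--Vietoris argument underlying that citation. Your second route for step (i) (proper cohomological descent along the normalization plus the de Rham--Witt computation on the smooth proper strata) is the sound one, and the delicacy you flag about the augmented complex starting with $W_n\Omega_{X_s}^\bullet$ for the singular $X_s$ is exactly the content supplied by the reference to \cite{[Ch99]}.
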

\begin{proof}
Since $X^{[p]}$ is smooth and proper, then $H_{rig}^q(X^{[p]})$ is pure of weight of $q$. Since $E_2^{p,q}$ is a sub quotient of this, we have that
\[
d_2^{p,q}: E_2^{p,q} \rightarrow E_2^{p+2,q-1}
\]
has to be the zero morphism, which proves the degeneracy. To prove that it converges to $H_{rig}^* (X_s)$ it is enough to notice that the simple complex associated to (\ref{dbcmplxrig}) gives this cohomology. This is given by \cite[Proposition 1.8 and Theorem 3.6]{[Ch99]}.
\end{proof}

The weight filtration on rigid cohomology (given by the Frobenius operator) is induced by the spectral sequence (\ref{spsq}). Now we list some properties of this filtration, denoted by $W_\bullet $, and its respective graded modules $Gr_\bullet$ on $H_{log-crys}^m :=H_{log-crys}^m((X_s,M_s)/W^\times ) \otimes K  $ and $H_{rig}^m:=H_{rig}^m(X_s)$, which are just a consequence of the previous remarks and theorem \ref{degconv}.

\begin{proposition}\label{Proposition 1}
\begin{enumerate}[(i)]
\item $N^k$ induces an isomorphism of vector spaces $$Gr_{m+k} H_{log-crys}^m \stackrel{\sim}{\longrightarrow} Gr_{m-k} H_{log-crys}^m$$ for all $k\geq 0$.
\item For $k\leq m$, we have a decomposition
\[
Gr_k(H_{log-crys}^m) = \bigoplus_{a=0}^{[k/2]} Gr_{k-2a} (\mathcal K_m),
\]
where $\mathcal K_m=\ker N \subset H_{log-crys}^m$ is the kernel of $N$ acting on $H_{log-crys}^m$, and the filtration on $\mathcal K_m$ is induced by the one on $H_{log-crys}^m$.
\item  $Gr_0(H_{rig}^m)=H^m(|\Gamma|)$, where $\Gamma$ is the dual graph associated to $X_s$.
\item $Gr_k(H_{rig}^m)=E_2^{m-k,k}$.
\end{enumerate}
\end{proposition}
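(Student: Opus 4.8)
The statement splits into two independent groups. Parts (i) and (ii) are facts about the nilpotent operator $N$ on $H^m_{log-crys}$ and follow from the input, recalled above from \cite[p.~24]{[CT12]}, that the weight filtration $W_\bullet$ on $H^m_{log-crys}$ coincides with the monodromy filtration of $N$ centered at $m$. Parts (iii) and (iv) are facts about the weight filtration on $H^m_{rig}$ and follow from Theorem~\ref{degconv} (degeneration at $E_2$ of the spectral sequence (\ref{spsq})) together with the purity of the cohomology of the smooth proper strata $X^{[p]}$. I would prove (i), then (ii), then (iv), and deduce (iii) as the case $k=0$ of (iv).

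\emph{Parts (i) and (ii).} The monodromy filtration of $N$ centered at $m$ is characterised as the unique increasing filtration $M_\bullet$ with $NM_j\subseteq M_{j-2}$ for all $j$ and $N^k\colon Gr^M_{m+k}\stackrel{\sim}{\longrightarrow}Gr^M_{m-k}$ for all $k\ge 0$; since $W_\bullet=M_\bullet$, assertion (i) is exactly this second defining property, so nothing is left to prove. For (ii) I would invoke the primitive (Lefschetz-type) decomposition carried by the pair $(N,W_\bullet)$: the standard linear algebra of a nilpotent endomorphism with its monodromy filtration identifies, for each $j\le m$, the space $Gr_j(\mathcal K_m)$ with $\ker\big(N\colon Gr_jH^m_{log-crys}\to Gr_{j-2}H^m_{log-crys}\big)$ and yields an isomorphism $Gr_kH^m_{log-crys}\cong\bigoplus_{a\ge 0}Gr_{k-2a}(\mathcal K_m)$. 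The range is finite and equal to $[0,[k/2]]$ because $Gr^W_jH^m_{log-crys}=0$ for $j<0$: the weights on $H^m_{log-crys}$ lie in $[\max(0,2m-2\dim X_s),\min(2m,2\dim X_s)]$ (they are built, through the comparisons of Section~\ref{Cohomology} and \cite{[CT12]}, from rigid cohomologies of the smooth proper strata and their Tate twists), so $Gr_{k-2a}(\mathcal K_m)=0$ once $k-2a<0$, i.e. once $a>[k/2]$.

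\emph{Parts (iv) and (iii).} By construction (see the paragraph preceding the proposition) the weight filtration on $H^m_{rig}$ is the one induced by the spectral sequence (\ref{spsq}). By Theorem~\ref{degconv} it degenerates at $E_2$, so $E_\infty^{p,q}=E_2^{p,q}$; and $E_2^{p,q}$ is a subquotient of $H^q_{rig}(X^{[p]})$, which is pure of weight $q$ because $X^{[p]}$ is smooth and proper over the finite field $k$. Hence $E_2^{p,q}$ is pure of weight $q$, so the only summand of the abutment in total degree $m$ that has weight $k$ is the one with $q=k$, $p=m-k$; this gives $Gr_k(H^m_{rig})=E_2^{m-k,k}$, which is (iv). For (iii), take $k=0$: $Gr_0(H^m_{rig})=E_2^{m,0}$, the $m$-th cohomology of the complex $(E_1^{\bullet,0},d_1)$, where $E_1^{p,0}=H^0_{rig}(X^{[p]})$. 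Since $X^{[p]}=\bigsqcup_{i_0<\cdots<i_p}X_{i_0}\cap\cdots\cap X_{i_p}$, the space $H^0_{rig}(X^{[p]})$ is free over $K_0$ on the connected components of the $(p{+}1)$-fold intersections, i.e. it is the group of simplicial $p$-cochains of the dual graph $\Gamma$ of $X_s$, and the differential $d_1$ induced by $\rho_p=(-1)^p\sum_a(-1)^a\delta_a^*$ is, up to sign, the simplicial coboundary; therefore $E_2^{m,0}=H^m(|\Gamma|)$ (with $K_0$-coefficients), which is (iii).

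\emph{Where the work is.} The only step that is not pure bookkeeping is (ii): it requires assembling the $sl_2$-representation structure on $(H^m_{log-crys},N,W_\bullet)$ — equivalently the primitive decomposition — and matching its primitive pieces with the graded quotients of $\ker N$, and it requires the non-negativity of the weights to cut the sum off at $a=[k/2]$. One must also be careful to keep the two weight conventions consistent: centering at $m$ on the log-crystalline side, and the indexing of (\ref{spsq}) that makes $Gr_k(H^m_{rig})=E_2^{m-k,k}$ rather than $E_2^{k,m-k}$. Everything else reduces to Theorem~\ref{degconv}, purity over $k$, and the identification of $H^0_{rig}$ of a disjoint union of strata with the simplicial cochains of $\Gamma$.
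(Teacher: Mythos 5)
Your proposal is correct and follows exactly the route the paper intends: the paper states Proposition \ref{Proposition 1} without a written proof, asserting it is ``just a consequence'' of the coincidence of the weight and monodromy filtrations on $H^m_{log-crys}$ (\cite[p.~24]{[CT12]}) and of Theorem \ref{degconv}, and your argument is precisely the standard fleshing-out of those two ingredients (the $sl_2$/primitive-decomposition linear algebra with the weight range $[0,2m]$ for (i)--(ii), and $E_2$-degeneration plus purity of $H^q_{rig}(X^{[p]})$ and the identification of $E_2^{\bullet,0}$ with the simplicial cochain complex of $\Gamma$ for (iii)--(iv)). No gaps; nothing further to add.
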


We also need the following, which is an immediate corollary of the Clemens-Schmid exact sequence. 

\begin{proposition}\label{Proposition 2}
For all $k<m$, $W_k(H_{rig}^m) \cong W_k(\mathcal K_m)$.
\end{proposition}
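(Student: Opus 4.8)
The plan is to read the statement off the Clemens--Schmid exact sequence~(\ref{CSS}) (in the shape recalled at the start of Section~\ref{MonCrit}) together with the weight bounds provided by the spectral sequence~(\ref{spsq}). Write $\alpha\colon H_{rig}^m(X_s)\to H_{log-crys}^m$ for the relevant arrow of~(\ref{CSS}). Exactness at the term $H_{log-crys}^m$ gives $\operatorname{im}\alpha=\ker N=\mathcal K_m$, so $\alpha$ is a surjection onto $\mathcal K_m$, and exactness one step to the left identifies $\ker\alpha$ with the image of the natural map $\beta\colon H_{X_s,rig}^{m}(X)\to H_{rig}^m(X_s)$. All arrows of~(\ref{CSS}) are Frobenius-equivariant, and a morphism between $F$-isocrystals that are pure of different weights vanishes; hence each such arrow is strict for the weight filtrations. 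Recalling that the filtration on $\mathcal K_m$ is by definition the one induced from $H_{log-crys}^m$ (Proposition~\ref{Proposition 1}(ii)), strictness of $\alpha$ yields $\alpha\bigl(W_kH_{rig}^m(X_s)\bigr)=\mathcal K_m\cap W_kH_{log-crys}^m=W_k\mathcal K_m$ for every $k$. Thus it remains only to prove that $\alpha$ is injective on $W_kH_{rig}^m(X_s)$ for $k<m$, that is, $\ker\alpha\cap W_{m-1}H_{rig}^m(X_s)=0$.

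For this I bound the weights on both sides. The spectral sequence~(\ref{spsq}) degenerates at $E_2$ by Theorem~\ref{degconv}, and its $E_1$-terms $H_{rig}^q(X^{[p]})$ are pure of weight $q$ since the strata $X^{[p]}$ are smooth and proper; hence $H_{rig}^m(X_s)$ has weights in $[0,m]$, and in particular $W_mH_{rig}^m(X_s)=H_{rig}^m(X_s)$. On the other side, the Poincar\'e-duality identification recalled after~(\ref{CSS}) gives
\[
H_{X_s,rig}^{m}(X)\;\cong\;H_{c,rig}^{2\dim X-m}(X_s)^\vee(-\dim X)\;=\;H_{rig}^{2\dim X_s+2-m}(X_s)^\vee(-\dim X),
\]
using that $X_s$ is proper (so compact-support and ordinary rigid cohomology agree) and $\dim X=\dim X_s+1$. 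If $m\le 1$ the right-hand side vanishes, so $\ker\alpha=0$ and the claim is immediate; assume $m\ge 2$. Then $H_{rig}^{2\dim X_s+2-m}(X_s)$ has weights $\le 2\dim X_s+2-m$, so its dual has weights $\ge m-2\dim X_s-2$, and the Tate twist $(-\dim X)$ raises weights by $2\dim X_s+2$; hence $H_{X_s,rig}^{m}(X)$ has weights $\ge m$.

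Combining the two bounds: since $\beta$ is strict, $\ker\alpha=\operatorname{im}\beta$ is a quotient of an $F$-isocrystal of weights $\ge m$ and simultaneously a subobject of $H_{rig}^m(X_s)$, which has weights $\le m$; therefore $\ker\alpha$ is pure of weight $m$, so $W_{m-1}(\ker\alpha)=\ker\alpha\cap W_{m-1}H_{rig}^m(X_s)=0$. Hence for each $k<m$ the map $\alpha$ restricts to an isomorphism $W_kH_{rig}^m(X_s)\xrightarrow{\ \sim\ }W_k\mathcal K_m$, which is the assertion. The only genuinely non-formal inputs are the strictness of the Clemens--Schmid arrows for the weight filtrations---where one uses Frobenius-equivariance together with the coincidence of the monodromy and weight filtrations on $H_{log-crys}^m$ established in~\cite{[CT12]}---and the standard estimate that $H_{rig}^j$ of a proper variety over a finite field carries weights $\le j$ (which, dualized, is what forces the lower bound $\ge m$ on the local-cohomology term). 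Granting these, everything else is bookkeeping with~(\ref{CSS}) and Theorem~\ref{degconv}.
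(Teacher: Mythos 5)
Your argument is correct and follows essentially the same route as the paper: restrict the Clemens--Schmid sequence around $H_{rig}^m(X_s)\to\mathcal K_m$ to the weight-$\le k$ pieces and use that $H_{X_s,rig}^m(X)$ has weights $>m-1$, so that for $k<m$ the restricted sequence collapses to an isomorphism $W_k(H_{rig}^m)\cong W_k(\mathcal K_m)$. The only difference is that you spell out what the paper leaves implicit --- strictness of the Frobenius-equivariant arrows for the weight filtrations and the Poincar\'e-duality computation showing the local-cohomology term has weights $\ge m$ --- which is a welcome elaboration rather than a different proof.
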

\begin{proof}
It is enough to note that since $H_{X_s,rig}^m (X)$ has weights $>m-1$, when restricting the Clemens-Schmid sequence to the $W_k$-parts we get an exact sequence:
\[
0\rightarrow W_k(H_{rig}^m) \rightarrow W_k(\mathcal K_m) \rightarrow 0
\]
for $k<m$.
\end{proof}

With the two previous propositions in hand, we can prove the following monodromy criteria:

\begin{theorem}\label{MonodromyCriteriapadic} 
Denote $H_{log-crys}^i:=H_{log-crys}^i((X_s,M_s)/W^\times)\otimes K$.  Let $h^k(|\Gamma|)$$=\dim H^k(|\Gamma|)$, $b_k(X_s)=\dim H_{log-crys}^k$, $h^k(X^{[j]})=\dim H_{rig}^k(X^{[j]})$ and $\Phi=\dim Gr_1 H_{rig}^1$. Then, we have the following:
\begin{enumerate}[(i)]
\item $N=0 $ on $H_{log-crys}^1$ if and only if  $h^1(|\Gamma|)=0$ if and only if $b_1(X_s)=\Phi$.
\item $N^2=0$ on $H_{log-crys}^2$ if and only if $h^2(|\Gamma|)=0$.
\item $N=0$ on $H_{log-crys}^2$ if and only if $h^2(|\Gamma|)=0$ and $\Phi=h^1(X^{[0]})-h^1(X^{[1]})$
\end{enumerate}
\end{theorem}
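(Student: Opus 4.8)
The plan is to read off all three statements from Propositions \ref{Proposition 1} and \ref{Proposition 2} together with the Clemens--Schmid sequence (\ref{CSS}), treating the cases $m=1$ and $m=2$ separately. The guiding principle is that the monodromy filtration centered at $m$ on $H^m_{log-crys}$ is controlled, via Proposition \ref{Proposition 1}(i), by the ``positive'' graded pieces $Gr_{m+k}$ for $k>0$, and these in turn inject (through the monodromy-weight identification and Proposition \ref{Proposition 2}) into the rigid cohomology of the strata, whose weights are pinned down by purity. So in each case I would rewrite the condition ``$N^j=0$'' as the vanishing of a list of graded pieces, then identify those pieces with the combinatorial/geometric quantities $h^k(|\Gamma|)$ and $h^1(X^{[j]})$.

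\emph{Part (i).} Here $m=1$. Since $H^1_{log-crys}$ carries the monodromy filtration centered at $1$ and $N^2=0$ automatically on $H^1$, the only nontrivial instance of Proposition \ref{Proposition 1}(i) is the isomorphism $N\colon Gr_2 H^1_{log-crys}\xrightarrow{\sim} Gr_0 H^1_{log-crys}$. Hence $N=0$ on $H^1_{log-crys}$ if and only if $Gr_2 H^1_{log-crys}=Gr_0 H^1_{log-crys}=0$, i.e. $H^1_{log-crys}=Gr_1 H^1_{log-crys}$. By Proposition \ref{Proposition 1}(ii) with $m=k=1$ we have $Gr_1 H^1_{log-crys}=Gr_1(\mathcal K_1)$, and by Proposition \ref{Proposition 2} this is $Gr_1 H^1_{rig}$, whose dimension is $\Phi$ by definition. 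So the condition $N=0$ is equivalent to $b_1(X_s)=\Phi$. For the bridge to $h^1(|\Gamma|)$: by Proposition \ref{Proposition 1}(iii), $Gr_0 H^1_{rig}=H^1(|\Gamma|)$, and by the duality/monodromy symmetry the vanishing of $Gr_0 H^1_{log-crys}$ is equivalent (through the Clemens--Schmid map $H^1_{rig}\to H^1_{log-crys}$, whose image is $W_1\cap(\text{weight}\le 0)$ part, matching $H^1(|\Gamma|)$) to $h^1(|\Gamma|)=0$. I would spell out that chain of identifications; it is essentially the $m=1$ special case of what is done in \cite{[Mo84]}.

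\emph{Parts (ii) and (iii).} Now $m=2$ and $N^3=0$. By Proposition \ref{Proposition 1}(i), $N^2$ induces $Gr_4 H^2_{log-crys}\xrightarrow{\sim} Gr_0 H^2_{log-crys}$, so $N^2=0$ on $H^2_{log-crys}$ iff $Gr_0 H^2_{log-crys}=Gr_4 H^2_{log-crys}=0$. Arguing as in part (i), $Gr_0 H^2_{log-crys}$ is identified with $Gr_0 H^2_{rig}=H^2(|\Gamma|)$, giving (ii). For (iii), $N=0$ on $H^2_{log-crys}$ requires in addition the vanishing of the remaining off-center piece, namely $Gr_3 H^2_{log-crys}\cong Gr_1 H^2_{log-crys}$ (by Proposition \ref{Proposition 1}(i) with $k=1$). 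By Proposition \ref{Proposition 1}(ii) with $m=2,k=1$ one has $Gr_1 H^2_{log-crys}=Gr_1(\mathcal K_2)$, which by Proposition \ref{Proposition 2} equals $Gr_1 H^2_{rig}=E_2^{1,1}$. The latter is the cohomology at the middle of $H^1(X^{[0]})\to H^1(X^{[1]})\to H^1(X^{[2]})$; since the spectral sequence degenerates at $E_2$ (Theorem \ref{degconv}) and the $E_1$-terms $H^1(X^{[j]})$ are pure of weight $1$, an Euler-characteristic/exactness bookkeeping shows $\dim E_2^{1,1}=\Phi$ and that $E_2^{1,1}=0$ is equivalent to $\Phi = h^1(X^{[0]})-h^1(X^{[1]})$ (using that $E_2^{2,1}$ contributes to weight $3$ in $H^3_{rig}$, which is controlled, and $E_2^{0,1}=Gr_1 H^1_{rig}$ has dimension $\Phi$). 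Combining, $N=0$ on $H^2_{log-crys}$ iff $h^2(|\Gamma|)=0$ and $\Phi=h^1(X^{[0]})-h^1(X^{[1]})$.

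\textbf{Main obstacle.} The routine part is invoking Proposition \ref{Proposition 1}(i)--(ii); the delicate part is the precise dimension count identifying $Gr_1 H^2_{rig}=E_2^{1,1}$ with the stated difference $h^1(X^{[0]})-h^1(X^{[1]})$ and, simultaneously, with $\Phi=\dim Gr_1 H^1_{rig}=\dim E_2^{0,1}$. This requires using the $E_2$-degeneration and purity of each $H^1(X^{[j]})$ to control where the weight-$1$ classes sit across the spectral sequence, and in particular to argue that $d_1^{1,1}\colon H^1(X^{[1]})\to H^1(X^{[2]})$ together with $d_1^{0,1}$ gives $\dim E_2^{1,1}=h^1(X^{[1]})-h^1(X^{[0]})+\dim E_2^{0,1}$ and $\dim E_2^{0,1}=\Phi$; that is the computation I would carry out in detail, everything else being formal consequences of the two propositions and the Clemens--Schmid sequence.
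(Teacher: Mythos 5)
Your overall strategy is the same as the paper's (reduce everything to Propositions \ref{Proposition 1} and \ref{Proposition 2} plus weight bookkeeping, then compute $Gr_1H^2_{rig}$ from the spectral sequence), but there are two concrete gaps. First, in part (i) you justify $Gr_1(\mathcal K_1)=Gr_1 H^1_{rig}$ by Proposition \ref{Proposition 2}; that proposition only gives $W_k(H^m_{rig})\cong W_k(\mathcal K_m)$ for $k<m$, and here $k=m=1$, exactly the range where its proof (weights $>m-1$ on $H^m_{X_s,rig}(X)$) says nothing. The paper supplies the missing input from the final remark of \cite{[Ch99]}: the exactness of $0\to H^1_{rig}\to H^1_{log-crys}\stackrel{N}{\to} H^1_{log-crys}$ (equivalently, the left end of the Clemens--Schmid sequence in degree one together with the vanishing of $H^1_{X_s,rig}(X)$), which identifies $\mathcal K_1\cong H^1_{rig}$ and hence gives $\dim Gr_1H^1_{log-crys}=\Phi$. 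Without that, the equivalence $N=0\Leftrightarrow b_1(X_s)=\Phi$ is not established; your weight-$0$ identification $Gr_0H^1_{log-crys}\cong H^1(|\Gamma|)$ is fine, since there $k=0<1$ and Propositions \ref{Proposition 1}(ii),(iii) and \ref{Proposition 2} do apply.

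Second, in part (iii) the step you yourself call the main obstacle is precisely where the paper does the work, and you leave it unproved (and once misstate it: you write $\dim E_2^{1,1}=\Phi$, whereas $\dim E_2^{0,1}=\Phi$ and the needed identity is $\dim E_2^{1,1}=\Phi-h^1(X^{[0]})+h^1(X^{[1]})$). An Euler-characteristic count alone only yields an inequality; the paper gets the equality by observing that the differential $d_1^{1,1}$ out of $E_1^{1,1}=H^1_{rig}(X^{[1]})$ vanishes because its target is trivial in the situation at hand (for the surface case $X^{[2]}$ is a disjoint union of points), so $E_2^{1,1}=H^1_{rig}(X^{[1]})/\mathrm{Im}\,d_1^{0,1}$ and rank--nullity with $\dim\ker d_1^{0,1}=\dim E_2^{0,1}=\Phi$ gives $\dim Gr_1H^2_{log-crys}=\Phi-h^1(X^{[0]})+h^1(X^{[1]})$; this is the concrete fact you must prove before combining it with $Gr_3\cong Gr_1$ and the $W_\bullet$ bookkeeping. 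Part (ii) and the remaining filtration arguments (e.g.\ $Gr_2H^2_{log-crys}=Gr_2\mathcal K_2=\mathcal K_2$ in the converse of (iii)) match the paper and are correct as sketched.
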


\begin{proof}
\begin{enumerate}[(i)]
\item By the final remark in \cite{[Ch99]}, we have an exact sequence
\[
0\rightarrow H_{rig}^1 \rightarrow H_{log-crys}^1 \stackrel{N}{\rightarrow} H_{log-crys}^1.
\]
In particular, $\ker N \cong H_{rig}^1$. Then, by part (i) of proposition \ref{Proposition 1}, we have $Gr_2 H_{log-crys}^1 \cong Gr_0 H_{log-crys}^1$, and by part (ii), we have $Gr_0 H_{log-crys}^1 \cong Gr_0 (\mathcal K_1)=Gr_0 (H_{rig}^1)$, and by part (iii), we conclude that $Gr_2 H_{log-crys}^1 \cong H^1 (|\Gamma|)$. \newline \indent
Similarly, by part (ii) of proposition \ref{Proposition 1}, we have $$Gr_1 H_{log-crys}^1 \cong Gr_1 (\mathcal K_1) = Gr_1 H_{rig}^1.$$
First suppose $N=0$. Then, $Gr_2 H_{log-crys}^1 = 0=Gr_0 H_{log-crys}^1$, since the first isomorphism is induced by $N$. Then it follows that $h^1(|\Gamma|)=0$ and $b_1(X_s)=\Phi$.\newline \indent
Now suppose that $h^1(|\Gamma|)=0$. Then, $Gr_0 H_{log-crys}^1 \cong Gr_0 (H_{rig}^1) = 0$. This implies that $Gr_1 H_{log-crys}^1 = H_{log-crys}^1$, but $Gr_1 H_{log-crys}^1 = Gr_1 \mathcal K_1$, hence $Gr_1 \mathcal K_1 = H_{log-crys}^1$. By part (ii) of proposition \ref{Proposition 1}, we also have $Gr_0 \mathcal K_1 \cong Gr_0 H_{log-crys}^1 =0 $ and $$Gr_2 \mathcal K_1 \oplus Gr_0 \mathcal K_1 = Gr_2 \mathcal K_1 \cong Gr_2 H_{log-crys}^1 =0, $$ hence $\mathcal K_1=Gr_1 \mathcal K_1 = H_{log-crys}^1$, which proves that $N=0$. \newline \indent
Finally, note that if $b_1(X_s)=\Phi$, then $Gr_1 H_{log-crys}^1 = H_{log-crys}^1$, and this implies that $h^1(|\Gamma|)=0$.

\item
For the proof of this and next part, we note that the Clemens-Schmid sequence for even indices can be seen as two exact sequences (since $N=0$ on $H_{log-crys}^0$):
$$
0 \rightarrow H_{rig}^0 \rightarrow H_{log-crys}^0 \rightarrow 0
$$
$$
0 \rightarrow H_{log-crys}^0 \rightarrow H_{X_s,rig}^2(X) \rightarrow H_{rig}^2 \rightarrow H_{log-crys}^2 \stackrel{N}{\rightarrow} H_{log-crys}^2 \rightarrow \cdots
$$

By part (ii) of proposition \ref{Proposition 1}, we have that $Gr_0 H_{log-crys}^2 \cong Gr_0 \mathcal K_2$, and by proposition \ref{Proposition 2}, this is isomorphic to $Gr_0 H_{rig}^2\cong H^2 (|\Gamma|)$. \newline \indent
Suppose that $N^2=0$ on $H_{log-crys}^2$. Then, by part (i) of proposition \ref{Proposition 1}, we have $Gr_4 H_{log-crys}^2 \cong Gr_0 H_{log-crys}^2 = 0$, and this gives that $h^2(|\Gamma|)=0$. \newline \indent
Conversely, suppose that $h^2(|\Gamma|)=0$. Then, $\dim Gr_0 H_{rig}^2=0$. Note that $N^2$ takes $W_0$ to $W_{-4}=0$, $W_1$ to $W_{-3}=0$, $W_2$ to $W_{-2}=0$, $W_3$ to $W_{-1}=0$ and $W_4$ to $W_0=Gr_0H_{rig}^2=0$. Thus, $N^2=0$.

\item
By part (ii) of proposition \ref{Proposition 1}, we have that $Gr_1 H_{log-crys}^2 \cong Gr_1 \mathcal K_2$, and by proposition \ref{Proposition 2}, this is isomorphic to $Gr_1 H_{rig}^2=E_2^{1,1}=\ker d_1^{1,1}/\mbox{ Im }d_1^{0,1}$.\newline \indent
Note that $d_1^{1,1}:H_{rig}^1 (X^{[1]}) \rightarrow H_{rig}^2 (X^{[1]})$ is the zero map (since $H_{rig}^2 (X^{[1]})$ is trivial). Then, we conclude that
\[
\dim Gr_1 H_{log-crys}^2= h^1(X^{[1]}) - \dim \mbox{Im } (H_{rig}^1(X^{[0]})\rightarrow H_{rig}^1(X^{[1]}))
\]
\[
=h^1(X^{[1]}) -(h^1(X^{[0]})- \dim \ker (H_{rig}^1(X^{[0]})\rightarrow H_{rig}^1(X^{[1]})) )
\]
\[
=\Phi-h^1(X^{[0]})+h^1(X^{[1]}).
\]
Now suppose that $N=0$. Then, $N^2=0$ and by the preceding part, we have $h^2(|\Gamma|)=0$. Moreover, $N$ induces an isomorphism
\[
Gr_3(H_{log-crys}^2) \stackrel{\sim}{\longrightarrow} Gr_1(H_{log-crys}^2),
\]
hence $Gr_1 H_{log-crys}^2 = 0$ and $\Phi-h^1(X^{[0]})+h^1(X^{[1]})=0$.\newline \indent
Conversely, suppose that $h^2(|\Gamma|)=0$ and $\Phi-h^1(X^{[0]})+h^1(X^{[1]})=\dim Gr_1 H_{log-crys}^2=0$ and let us prove that $\mathcal K_2=H_{log-crys}^2$ (hence $N=0$). First note that $Gr_0 H_{log-crys}^2 =0$, since $h^2(|\Gamma|)=0$, i.e., $W_0=0$. But since $Gr_1 H_{log-crys}^2=0$, then $W_1=0$. By part (ii) of proposition \ref{Proposition 1}, we have $Gr_3 H_{log-crys}^2=0$, hence $W_3=W_2$. By the same argument, $Gr_0 H_{log-crys}^2 \cong Gr_4 H_{log-crys}^2$, hence $W_4=W_3=W_2=H_{log-crys}^2$. This gives $Gr_2 H_{log-crys}^2=H_{log-crys}^2$. By part (ii) of proposition \ref{Proposition 1}, we get
\[
Gr_2 H_{log-crys}^2 = Gr_2 \mathcal K_2 \oplus Gr_0 \mathcal K_2 = Gr_2 \mathcal K_2 = \mathcal K_2,
\]
which concludes the proof.
\end{enumerate}
\end{proof}

\section{Application to $K3$ surfaces}\label{AppK3}
In this section we assume $p>3$. Let $K$ be a finite extension of $\mathbb Q_p$ and denote by $O_K$ its ring of integers, $\pi$ a uniformizer of $O_K$ and $k$ its residue field. We consider a smooth, projective $K3$ surface $X_K$ over $K$, i.e., a smooth, projective surface with trivial canonical sheaf and irregularity 0. Moreover, we assume that $X_K$ has a semi-stable model $X \rightarrow \Spec O_K$, i.e., $X$ is a proper scheme over $O_K$, \'etale locally \'etale over a scheme of the form $$\Spec (O_K[x_1,...,x_n]/(x_1\cdots x_r-\pi)).$$ Let $X_s:=X\otimes_{O_K}k$ be the special fiber of $X$ and assume it is a combinatorial $K3$ surface. In particular, we may assume that we are in one of the following cases:
\begin{enumerate}[I)]
\item $X_s$ is a smooth $K3$ surface over $k$
\item $X_s=X_0\cup X_1 \cup \cdots \cup X_{j+1}$ is a chain of smooth surfaces, with $X_0, X_{j+1}$ rational and the others are elliptic ruled and double curves on each of them are rulings.
\item $X_s=X_0 \cup X_1 \cup \cdots \cup X_{j+1}$ is a chain of smooth surfaces and every $X_i$ is rational, and the double curves on $X_i$ are rational and form a cycle on $X_i$. The dual graph of $X$ is a triangulation of the sphere $S^2$.

\end{enumerate}

We shall refer to each of these as surface of type I, II and III, respectively. 
\begin{remark}
The definition of a combinatorial $K3$ surface \cite[Definition 3.2]{[Na00]} requires for the cases II) and III) that the geometric special fiber $X_{\overline s}$ has a decomposition of those types and not necessarily $X_s$, but this implies that there exists a finite extension $k'$ of $k$ such that the base change $X_{k'}=X_s \otimes_k k'$ has such decomposition. Since $k'$ is again a finite field, we may assume that it is $X_s$ the one that admits such decomposition.
\end{remark}

\begin{remark}
The condition of having a combinatorial special fiber is achieved for example if the model satisfies $\omega_{X/O_K} = 0$. If we take any general semistable model of $X_K$, one can apply Kawamata's minimal model program (only in the case $p>3$, see \cite{[Kw93]},\cite{[Kw98]}) and we get the same situation, but with a model which is an algebraic space and not necessarily a scheme (even if the fibers are schemes themselves). 
\end{remark}

\begin{remark}\label{RationalRmk}
To study surfaces of type II we use the fact that for a smooth, proper, rational surface $Y$ over a field (such as $X_0$ and $X_{j+1}$), we have $H_{rig}^1(Y) $ is zero. Indeed, first note that since $Y$ is smooth and proper over a field, then $Y$ is necessarily projective (see \cite[Remark 3.5, Ch.9]{[Liu]}). Then, we use Castelnuovo-Zariski's criterion in characteristic $p$ as stated in \cite[Theorem 4.6]{[Li13]} to get that the first $\ell$-adic \'etale cohomology group is trivial. Since $Y$ is smooth and proper, we conclude that the dimension of the first rigid cohomology group is also 0, since rigid cohomology is a Weil cohomology (see \cite{[Ch98]}). 
\end{remark}

Recall from \cite[Section 2]{[Na00]} that the special fiber $X_s$ can be endowed with a log structure $M_s$ in such a way that we have a log smooth morphism $(X_s,M_s)\rightarrow (\Spec k, \mathbb N^m),$ where $m$ is the number of connected components of the singular locus of $X_s$ and the log structure is defined by $e_i \mapsto 0$, where $e_i$ denotes the $i$th canonical generator of $\mathbb N^m$. \newline \indent 
Let us make an explicit description of $M_s$. In general, suppose that $Y$ is a normal crossing variety and denote by $Y_{\textrm{sing}}$ the singular locus. Denote by $Y_1,...,Y_m$ the connected components. For each $i=1,...,m$, we can endow $\Spec (k[x_0,...,x_n]/(x_0\cdots x_r))$ with a log structure given by as follows:
\[
\mathbb N^{m+r}=\mathbb N^{i-1}\oplus \mathbb N^{r+1} \oplus \mathbb N^{m-i} \rightarrow k[x_0,...,x_n]/(x_0\cdots x_r)
\]
\[
e_i \mapsto \left\{ \begin{array}{lcll} 
0 && \textrm{ if } e_i \in \mathbb N^{i-1} \\
x_{i-1} && \textrm{ if } e_i \in \mathbb N^{r+1} \\
0 && \textrm{ if } e_i \in \mathbb N^{m-i}
\end{array}
\right.
\]

Then,

\begin{enumerate}
\item If $x$ is a smooth point of $Y$, \'etale locally on a neighbourhood of $x$, the log structure is the pull-back of the log structure of the log-point $(\Spec k, \mathbb N^m)$
\item If $x\in Y_i$, \'etale locally on a neighbourhood of $x$, the log structure is the pull-back of the log structure defined above.
\end{enumerate}
Since $X_s$ is in particular a normal crossing variety over $k$, we can endow $X_s$ with this log structure and we denote it by $M_s$. Note that this is not the usual log structure defined for example in \cite{[Ka89]}, which we denote here by $M_s'$. As it is stated in \cite[p.358]{[Na00]}, the relationship between them is
\[
(X_s,M_s') = (X_s,M_s) \times_{(\Spec k, \mathbb N^m)} (\Spec k, \mathbb N),
\]
where the morphism of log schemes $(\Spec k, \mathbb N) \rightarrow (\Spec k, \mathbb N^m)$ is defined by $s:\mathbb N^m \rightarrow \mathbb N$ the sum of the components. Moreover, the sheaves of relative log differentials $\omega_{(X_s,M_s)/(\Spec k, \mathbb N^m)}^{\bullet}$ and $\omega_{(X_s,M_s')/(\Spec k, \mathbb N)}^{\bullet}$ coincide, and there is also a canonical isomorphism 
\[
H_{log-crys}^i ((X_s,M_s)/(W,\mathbb N^m)) \cong H_{log-crys}^i ((X_s,M_s')/(W,\mathbb N)),
\] 
as stated and proved in \cite{[Na00]}.\newline \indent
Assume for the moment that $X_s$ is either of type I), type III) or type II) such that the double curve is ordinary. Then by \cite[Corollary 5.4, Proposition 5.9]{[Na00]}, there exists a semistable family $X^{\log}$ over $\Spec k[[t]]^{\log} $ such that its special fiber is precisely $(X_s,M_s)$. Then we have the following diagram, with cartesian squares:
\begin{equation}\label{bigdiagram2}
\begindc{\commdiag}[700]
\obj(0,1)[Xs']{$(X_s,M_s')$}
\obj(0,0)[kk]{$(\Spec k,\mathbb N)$}
\obj(1,1)[Xs]{$(X_s,M_s)$}
\obj(1,0)[km]{$(\Spec k, \mathbb N^m)$}
\obj(2,1)[Xl]{$X^{\log}$}
\obj(2,0)[ss]{$\Spec k[[t]]^{\log}$}

\mor{Xs'}{kk}{$$}[\atleft,\solidarrow]
\mor{Xs'}{Xs}{$$}[\atright,\solidarrow]
\mor{kk}{km}{$$}[\atright,\solidarrow]
\mor{Xs}{km}{$$}[\atleft,\solidarrow]
\mor{Xs}{Xl}{$$}[\atleft,\solidarrow]
\mor{Xl}{ss}{$$}[\atleft,\solidarrow]
\mor{km}{ss}{$$}[\atleft,\solidarrow]

\enddc
\end{equation}
Let $\widetilde X$ be the underlying scheme of $X^{\log}$. Then we can apply the same technique as in section \ref{Reduction} and get a smooth curve $C$ over $k$, and a regular scheme $X_C$ with a proper, flat morphism $X_C \rightarrow C$ such that there exists a $k$-rational point $s\in C$ such that the fiber of $f$ at $s$ is precisely $X_s$. Then, we have the following: 
\begin{theorem}\label{mainthm}
\begin{enumerate}[(a)]
\item $X_s$ is of type I if and only if $N=0$ on $H_{log-crys}^2$.
\item $X_s$ is of type II if and only if $N\neq 0$ and $N^2=0$ on $H_{log-crys}^2$.
\item $X_s$ is of type III if and only if $N^2 \neq 0$ on $H_{log-crys}^2$.
\end{enumerate}
\end{theorem}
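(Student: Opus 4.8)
The plan is to apply the monodromy criteria of Theorem~\ref{MonodromyCriteriapadic} to the semistable family $f\colon X_C\to C$ over the finite field $k$ constructed above, whose fiber over $s\in C$ is $(X_s,M_s)$; once that family is in place the remaining input is purely combinatorial, namely the dual graph $\Gamma$ of $X_s$ and the first rigid cohomology of the strata $X^{[0]}$ and $X^{[1]}$. First I would record the dual graphs. If $X_s$ is of type I it is irreducible, so $\Gamma$ is a single point and $h^2(|\Gamma|)=0$. If $X_s$ is of type II then $\Gamma$ is the chain on the $j+2$ vertices $X_0,\dots,X_{j+1}$, so $|\Gamma|$ is an interval and again $h^2(|\Gamma|)=0$. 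If $X_s$ is of type III then by definition $|\Gamma|\cong S^2$, so $h^2(|\Gamma|)=1$. By Theorem~\ref{MonodromyCriteriapadic}(ii) this already gives: in type III, $N^2\neq 0$ on $H_{log-crys}^2$, while in types I and II, $N^2=0$.

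To separate types I and II I would use Theorem~\ref{MonodromyCriteriapadic}(iii): since $h^2(|\Gamma|)=0$ in both cases, $N=0$ on $H_{log-crys}^2$ if and only if $\Phi=h^1(X^{[0]})-h^1(X^{[1]})$, where $\Phi=\dim Gr_1 H_{rig}^1(X_s)\ge 0$. In type I, $X^{[0]}=X_s$ is a smooth $K3$ surface, so $H_{rig}^1(X_s)=0$ (whence $h^1(X^{[0]})=0$ and $\Phi=0$) and $X^{[1]}=\varnothing$ (whence $h^1(X^{[1]})=0$); the equality $0=0-0$ holds, so $N=0$ on $H_{log-crys}^2$. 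In type II, the two end components $X_0,X_{j+1}$ are rational, so $H_{rig}^1(X_0)=H_{rig}^1(X_{j+1})=0$ by Remark~\ref{RationalRmk}, while each of the $j$ elliptic ruled middle components has $\dim H_{rig}^1=2$; thus $h^1(X^{[0]})=2j$. The $j+1$ double curves forming $X^{[1]}$ are elliptic curves by the classification of type II special fibers (\cite{[Na00]}), so $h^1(X^{[1]})=2(j+1)$. Hence $h^1(X^{[0]})-h^1(X^{[1]})=-2<0\le\Phi$, the second condition in Theorem~\ref{MonodromyCriteriapadic}(iii) fails, and $N\neq 0$ on $H_{log-crys}^2$; together with $N^2=0$ this settles type II.

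Finally I would note that since $N^3=0$ on $H_{log-crys}^2$ always, the three conditions ``$N=0$'', ``$N\neq 0$ and $N^2=0$'', and ``$N^2\neq 0$'' are mutually exclusive and jointly exhaustive; by the combinatorial hypothesis the three types are likewise mutually exclusive and jointly exhaustive for $X_s$, and the three implications proved above match type I, II, III with these three conditions respectively. Therefore each implication upgrades to an equivalence, which is precisely the statement of the theorem.

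I expect the only genuine difficulty to be the case, excluded in the construction above, of a type II surface whose double curve is not ordinary: for such $X_s$ the existence of the log-smooth semistable deformation $X^{\log}$ over $\Spec k[[t]]^{\log}$ (hence of the family $X_C\to C$) is not provided by \cite{[Na00]}, so Theorem~\ref{MonodromyCriteriapadic} is not directly available. I would handle this either by constructing a log-smooth deformation of $(X_s,M_s)$ by hand and verifying that it is semistable, or by a spreading-out/base-change argument reducing to the ordinary case; in either case the rigid cohomology of the strata $X^{[0]}$ and $X^{[1]}$ is unchanged, so the conclusion that $N\neq 0$ and $N^2=0$ persists.
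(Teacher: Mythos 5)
For types I, III and type~II with ordinary double curve your argument is exactly the paper's: the same dual-graph computations ($h^2(|\Gamma|)=0$ for I and II, $h^2(S^2)=1$ for III), the same count $h^1(X^{[0]})=2j$, $h^1(X^{[1]})=2j+2$ giving $h^1(X^{[0]})-h^1(X^{[1]})=-2<0\le\Phi$, and the same trichotomy logic upgrading the three implications to equivalences via Theorem~\ref{MonodromyCriteriapadic}.

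The genuine gap is the case you flag at the end and then leave unexecuted: type~II with supersingular double curve. Neither of your two sketched remedies works as stated. ``Reducing to the ordinary case by base change'' is not possible, since supersingularity of the double curve is a geometric property and survives any field extension; and ``constructing a log-smooth semistable deformation by hand'' is exactly the nontrivial input you are missing --- Nakkajima does provide it in this case (\cite[Corollary 6.9]{[Na00]}), but only as a projective semistable family over $\Spec \overline k[[t]]$, i.e.\ with the \emph{geometric} special fiber $X_{\overline s}$, whereas Theorem~\ref{MonodromyCriteriapadic} requires a semistable family over a smooth curve over a \emph{finite} field having $X_s$ itself (or a finite base change of it) as a fiber. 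Your closing remark that ``the rigid cohomology of the strata is unchanged, so the conclusion persists'' does not address this: the numbers $h^1(X^{[i]})$ are not the issue; the availability of the Clemens--Schmid sequence and hence of the monodromy criteria is. The paper closes this by rerunning the N\'eron--Popescu approximation of section~\ref{Popescu} over $\overline k[t]$, descending the resulting family to a smooth $k'[t]$-algebra for some finite extension $k'/k$ via Proposition~\ref{Iovitaremark}, locating a closed fiber $X_x$ with $X_x\otimes_{k'}\overline k\cong X_{\overline s}$, hence $X_x\otimes_{k'}k''\cong X_s\otimes_k k''$ for a further finite extension $k''$, and then observing that both the combinatorial type and the nilpotency degree of $N$ on $H_{log-crys}^2$ are unchanged under the corresponding (faithfully flat) extension of scalars, so the already-established ordinary-style argument applies after base change. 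Without some such descent step your proof does not cover all combinatorial type~II special fibers, and the ``only if'' directions of parts (a) and (c) would fail to be justified when the excluded case arises.
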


\begin{proof}
We shall prove that if $N=0$ on $H_{log-crys}^2$, then $X_s$ is neccesarily of type I; if $N\neq 0$ and $N^2=0$, then $X_s$ is necessarily of type II; and if $N^2 \neq 0$, then $X_s$ is necessarily of type III. This shall prove the equivalence, since we know that we can be only in one of these three cases.\newline \indent
First assume that $X_s$ is of type I. Then, $X^{[0]}=X_s$, $X^{[1]}=\varnothing$ and the dual graph $\Gamma$ is only one point. In this case, the spectral sequence has the form
\[
E_{\infty}^{p,q}=E_1^{p,q}=H_{rig}^q (X^{[p]})=
\left\{
\begin{array}{lcll}
0 && \textrm{if } p \geq 1 \\
H_{rig}^q(X_s) && \textrm{if } p=0
\end{array}
\right.
\]
and this gives immediately that $\Phi=\dim Gr_1 H_{rig}^1 = \dim E_2^{0,1}=0$. Since $H_{rig}^1 (X_s)=H_{rig}^1(X^{[1]})=0$, and $h^2(|\Gamma|)=0$, we conclude that $N=0$, by theorem \ref{MonodromyCriteriapadic} (iii). \newline \indent

Now assume that $X_s$ is of type II (use the same notation as in the beginning of the section). In this case, it is clear that the dual graph is homeomorphic to $[0,1]$. In particular, $h^2(|\Gamma|)=0$ and $N^2=0$ by theorem \ref{MonodromyCriteriapadic} (ii). 
By definition of the type II, $X^{[1]}$ is the disjoint union of $j+1$ elliptic curves, hence $h^1(X^{[1]})=2j+2$. Since $X_0$ and $X_{j+1}$ are rational surfaces, by lemma \ref{RationalRmk}, we have
\[
h^1(X^{[0]})=\sum_{i=1}^j h^1(X_i),
\]
but the $X_i$'s are ruled, with the double curves rulings. Then, $h^1(X^{[0]})=2j$ and we get $h^1(X^{[0]})-h^1(X^{[1]})=-2$, but $\Phi$ cannot be negative, hence $$\Phi \neq h^1(X^{[0]})-h^1(X^{[1]}) $$ and $N\neq 0$.
Finally, assume that $X_s$ is of type III. In this case, $h^2(|\Gamma|)=h^2(S^2)=1 \neq 0$, hence $N^2 \neq 0$.\newline \indent
The only remaining case is when $X_s$ is of type II such that the double curve is not ordinary, i.e., supersingular. In this case, by \cite[Corollary 6.9]{[Na00]},  the geometric special fiber $X_{\overline s}$ is the special fiber of a projective semistable family $\widetilde X$ over $\Spec \overline k [[t]]$. Now we use the same approximation argument from section \ref{Popescu}, and we get the following cartesian diagram: 

\[
\begindc{0}[700]
\obj(0,1)[XX]{$\widetilde X$}
\obj(0,0)[VV]{$\Spec \overline k[[t]]$}
\obj(1,1)[XA]{$X_A$}
\obj(1,0)[YY]{$\text{Spec } A$}
\mor{XX}{XA}{}[\atleft,\solidarrow]
\mor{XX}{VV}{}[\atright,\solidarrow]
\mor{VV}{YY}{}[\atright,\solidarrow]
\mor{XA}{YY}{}[\atleft,\solidarrow]
\enddc
\]
where $A$ is a smooth $\overline k[t]$-algebra. By properness of $X_A$ over $A$ and proposition \ref{Iovitaremark}, there exists a finite extension $k'$ of $k$ and a $k'[t]$-algebra $A'$ over which we can define $X_{A'}$ to have a cartesian diagram

\[
\begindc{0}[700]
\obj(0,1)[XX]{$X_A$}
\obj(0,0)[VV]{$\Spec A$}
\obj(1,1)[XA]{$X_{A'}$}
\obj(1,0)[YY]{$\text{Spec } A'$}
\mor{XX}{XA}{}[\atleft,\solidarrow]
\mor{XX}{VV}{}[\atright,\solidarrow]
\mor{VV}{YY}{}[\atright,\solidarrow]
\mor{XA}{YY}{$f'$}[\atleft,\solidarrow]
\enddc
\]
The composition $A'\rightarrow A \rightarrow \overline k[[t]] \rightarrow \overline k$ defines a closed point $x$ in $\Spec A'$. Then, the fiber of $f'$ at $x$, denoted by $X_x$, satisfies
\[
X_x \otimes_{k'} \overline k \cong X_s \otimes_k \overline k = X_{\overline s}
\]  
Then, there exists a finite extension $k''$ of $k'$ such that $$X_x \otimes_{k'} k'' \cong X_s \otimes_{k} k''=:X_s''.$$ Since $(X_s \otimes_k k'') \otimes_{k''} \overline k = X_s'' \otimes_{ k''} \overline k$, we get that $X_s$ is of the same type (I, II or III) as $X_s''$. Moreover, if we denote by $K''/K$ the extension corresponding to $k''/k$, then the degree of nilpotency on $H_{log-crys}^2$ and $H_{log-crys}^2 \otimes_K K''$ is preserved, since any extension of fields is faithfully flat. This completes the proof for all the cases.
\end{proof}

A direct consequence of this theorem is the following, which is the desired good reduction criterion:
\begin{corollary}\label{MainCorollary}
Let $p>3$ and $K$ a finite extension of $\mathbb Q_p$. Let $X_K$ be a smooth, projective $K3$ surface over $K$, that admits a semistable model over $O_K$. Then, $X_K$ has good reduction if and only if the monodromy operator $N$ on $H_{DR}^2(X_K)$ is zero.
\end{corollary}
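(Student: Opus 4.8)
The plan is to translate the statement into log-crystalline cohomology, where Theorem~\ref{mainthm} applies verbatim. First I would invoke the Hyodo--Kato isomorphism \cite[Theorem 5.1]{[HK94]} for the given semistable model $X/O_K$ with special fiber $(X_s,M_s)$: it provides a canonical identification
\[
H_{log-crys}^2((X_s,M_s)/W^\times)\otimes_{K_0}K \;\stackrel{\sim}{\longrightarrow}\; H_{DR}^2(X_K),
\]
carrying the monodromy operator on the left-hand side to the operator $N$ on $H_{DR}^2(X_K)$ appearing in the statement. Since $K_0\to K$ is faithfully flat, $N$ is zero on $H_{DR}^2(X_K)$ if and only if it is zero on $H_{log-crys}^2((X_s,M_s)/W^\times)\otimes K_0$. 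By \cite[Proposition 3.5]{[Na00]} I may, after replacing $K$ by a finite extension, assume that $X_s$ is a combinatorial $K3$ surface; this replacement is harmless, since a finite field extension $k\to k'$ is faithfully flat and therefore preserves both the vanishing of $N$ (exactly as in the proof of Theorem~\ref{mainthm}) and the smoothness of $X_s$.

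Next I would apply Theorem~\ref{mainthm}(a): the vanishing of $N$ on $H_{log-crys}^2((X_s,M_s)/W^\times)$ is equivalent to $X_s$ being of type I, i.e.\ to $X_s$ being a smooth $K3$ surface over $k$. If $X_s$ is smooth, then in each local \'etale chart $\Spec\bigl(O_K[x_1,\dots,x_n]/(x_1\cdots x_r-\pi)\bigr)$ of $X$ the reduced special fiber is cut out by $x_1\cdots x_r=0$, which is $k$-smooth only when $r=1$; hence $X$ is \'etale-locally \'etale over $\mathbb A^1_{O_K}$, so $X$ is smooth over $O_K$ and $X_K$ has good reduction. For the converse, good reduction of $X_K$ produces a smooth proper model $\mathcal X/O_K$ whose special fiber carries the trivial log structure, so that $H_{DR}^2(X_K)\cong H_{crys}^2(\mathcal X_s/W)\otimes K$ has vanishing monodromy; since the $(\varphi,N)$-module underlying $H_{DR}^2(X_K)$ does not depend on the choice of semistable model, $N=0$. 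Alternatively one may quote \cite[Theorem 0.2]{[Ts99]}: good reduction makes $H_{\textrm{\'et}}^2(X_{\overline K},\mathbb Q_p)$ crystalline, and its associated $(\varphi,N)$-module is precisely $H_{log-crys}^2((X_s,M_s)/W^\times)\otimes K_0$, so $N$ vanishes there too.

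I expect the one genuine subtlety to be the bookkeeping around the operator $N$. One has to know that the $N$ in the statement is canonically the Hyodo--Kato monodromy attached to some --- equivalently, to any --- semistable model, and that the finite extension of $K$ used to reach combinatorial reduction really preserves both hypotheses; the delicate half is ``good reduction over $K$'', since potential good reduction need not descend in general, so one must use that the whole argument is insensitive to faithfully flat field extensions (as already exploited at the end of the proof of Theorem~\ref{mainthm}). Granting this, the corollary is just Theorem~\ref{mainthm}(a) combined with the Hyodo--Kato comparison, the remainder being the elementary local computation that a smooth special fiber forces the model itself to be smooth over $O_K$.
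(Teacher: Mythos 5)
Your proposal is correct and follows essentially the route the paper intends: the paper states the corollary as a direct consequence of Theorem \ref{mainthm}, with the operator $N$ on $H_{DR}^2(X_K)$ understood via the Hyodo--Kato identification with $H_{log-crys}^2((X_s,M_s)/W^\times)\otimes K_0$, the reduction to combinatorial $X_s$ handled as in the introduction and Remarks of section \ref{AppK3}, and good reduction matched with the type~I case. Your added details (the local computation that a smooth special fiber forces $r=1$, the unramified residue-field extension being harmless by faithfully flat descent, and model-independence of the $(\varphi,N)$-module via \cite[Theorem 0.2]{[Ts99]} for the converse) are exactly the bookkeeping the paper leaves implicit.
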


 \noindent
Genaro Hern\'andez Mada. Departamento de Matematicas. Universidad de Sonora. Blvd Luis Encinas y Rosales. Hermosillo, Son. Mexico 83240. E-mail: genarohm@mat.uson.mx

\end{document}